

 \documentclass[11pt,twoside,reqno,centertags,draft]{amsart}

\setcounter{page}{1}



\thanks{AMS Subject Classifications:  35J20, 46E30}

 \usepackage{amsmath,amsthm,amsfonts,amssymb}
\usepackage[mathscr]{euscript}
 \usepackage{hyperref} 
 \usepackage{mathrsfs} 
\usepackage{graphicx}
\usepackage{color}
  \usepackage{epsfig}

 \pagestyle{myheadings}
\thispagestyle{empty}
    \textwidth = 5 true in
   \textheight = 7   true in

 \voffset= -20 true pt
 \oddsidemargin=0true in
 \evensidemargin=0true in

 \newtheorem{thm}{Theorem}[section]
 
 \newtheorem{lem}[thm]{Lemma}

 \theoremstyle{definition}
 \newtheorem{defin}[thm]{Definition}
 \newtheorem{rem}[thm]{Remark}
 \newtheorem{exa}[thm]{Example}

 \newtheorem*{notations}{Notations}
 
 \DeclareMathOperator*{\esssup}{ess\,sup}

\begin{document}
 
\title{  A note on quasilinear Schr\"odinger equations 
 with singular or vanishing radial potentials }
\date{}
\maketitle     
 
\vspace{ -1\baselineskip}

{\small
\begin{center}
{\sc Marino Badiale}\footnote{Partially supported by the 
PRIN2012 grant ``Aspetti variazionali e perturbativi nei 
problemi differenziali nonlineari”.}
 and {\sc Michela Guida}\\
Dipartimento di Matematica ``Giuseppe Peano”\\
Universit\`a degli Studi di Torino\\
Via Carlo Alberto 10, 10123 Torino, Italy\\[10pt]
{\sc Sergio Rolando}\\
Dipartimento di Matematica e Applicazioni\\
Universit\`a di Milano-Bicocca\\
Via Roberto Cozzi 53, 20125 Milano, Italy  \\[10pt]
\end{center}
}

\numberwithin{equation}{section}
\allowdisplaybreaks

 \smallskip

 \begin{quote}
\footnotesize
{\bf Abstract.}  
 In this note we complete the study of \cite{BGR_quasi}, where we got existence results for the quasilinear elliptic equation
 \begin{equation*}
 	-\Delta w+ V\left( \left| x\right| \right) w - w \left( \Delta w^2 \right)= K(|x|) g(w) \quad \text{in }\mathbb{R}^{N},  
 \end{equation*}
 \noindent with singular or vanishing continuous radial potentials $V(r)$, $K(r)$. In \cite{BGR_quasi} we assumed, for technical reasons, that $K(r)$ was vanishing as $r \rightarrow 0$, while in the present paper we remove this obstruction. To face the problem we apply a suitable change of variables $w=f(u)$ and we find existence of non negative solutions by the application of variational methods. Our solutions satisfy a weak formulations of the above equation, but they are in fact classical solutions in $\mathbb{R}^{N} \setminus \{0\}$. 
 The nonlinearity $g$ has a double-power behavior, whose standard example is $g(t) = \min \{ t^{q_1 -1}, t^{q_2 -1}  \}$ ($t>0$), recovering the usual case of a single-power behavior when $q_1 = q_2$.

\end{quote}

\section{Introduction}

In this paper we complete the study of \cite{BGR_quasi}, where we got existence results for the quasilinear elliptic equation
\begin{equation}\label{EQ}
	-\Delta w+ V\left( \left| x\right| \right) w - w \left( \Delta w^2 \right)= K(|x|) g(w)  \quad \text{in }\mathbb{R}^{N}.
\end{equation}

\noindent Such an equation arises in the search of standing waves for an evolution Schr\"{o}dinger equation which has been used to study several physical phenomena (see \cite{LiuWang,PoppenbergSchmittWang, Kwon} and the references therein), such as laser beams in matter \cite{Brandi-et} and quasi-solitons in superfluids films \cite{Kuri}.
In recent times a great amount of work has been made on equation (\ref{EQ}) to overcome the non-trivial technical difficulties associated with it (see \cite{AiresSouto, ColinJeanjean, Uberlandio2, FangSzulkin, FurtadoSilvaSilva, Gloss,LiuLiuWang1, LiuLiuWang2, SilvaVieira, YangWangZhao} and the references therein). 
In \cite{BGR_quasi} and in the present paper we follow an idea introduced in \cite{LiuWang} and then used by several authors, which exploits a suitable change of variable $w=f(u)$: the problem in the new unknown $u$ can be then faced with usual variational methods, by working in an Orlicz-Sobolev space.

In almost all the papers dealing with (\ref{EQ}), the potential $V$ (be it radial or nonradial) is supposed to be positive and not vanishing at infinity. At the best of our knowledge, the only papers dealing with a potential $V$ allowed to vanish at infinity are \cite{AiresSouto,Uberlandio2,Kwon,Li-Huang}. 
In \cite{AiresSouto} and \cite{Kwon} the authors assume that $V$ is bounded, while in \cite{Li-Huang} existence of solutions is proved for possibily singular $V$'s but bounded $K$'s. In \cite{Uberlandio2}, which is the paper that inspired our work, both $V$ and $K$ can be singular or vanishing at zero or at infinity, but the authors assume that they are radial and essentially behave as powers of $|x|$ as $|x|\to 0$ and $|x|\to \infty$. 
In \cite{BGR_quasi}, instead, we have studied the case in which both $V$ and $K$ are radial potentials that can be singular or vanishing at zero or at infinity, and do not need to exhibit a powel-like behavior. The main novelties of our approach are the application of compact embeddings into the sum Lebesgue space $L^{q_1}_K + L^{q_2}_K$ (see Section 2 below) and the use of assumptions on the potentials ratio $K/V$, not on the two potentials separately.
Nevertheless, for technical reasons, in almost all cases we had to assume that $K$ was vanishing at the origin. In the present paper we remove this obstruction. 

This note is organized as follows. In Section 2 we introduce our hypotheses on $V$ and $K$, the change of variables $w=f(u)$ and the function space $E$ in which we will work. Then we state a general result concerning the compactness of the embedding of $E$ into $L_{K}^{q_{1}}+L_{K}^{q_{2}}$ (Theorem \ref{THM(cpt)}), and we give some explicit conditions ensuring that the embedding is compact (Theorems \ref{THM0} and \ref{THM1}). In Section \ref{SEC: funct} we introduce our hypotheses on the nonlinearity $g$ and we study the main properties of the functional $I$ associated to the dual problem, and in particular of its critical points, which give rise to the solutions of (\ref{EQ}). In Section \ref{SEC: ex} we apply our embedding results to get existence of nonnegative solutions to (\ref{EQ}), stating and proving the main existence result of the paper, which is Theorem \ref{THM:ex}. In section \ref{SEC:EX}, we give concrete examples of potentials $V,K$ satisfying our hypotheses and non included in the previous literature.

\begin{notations}
	
	We end this introductory section by collecting
	some notations used in the paper.

	\noindent  $\bullet $ $\mathbb{R}_{+} = ( 0, +\infty ) = \left\{ x\in \mathbb{R} : x>0 \right\}$.
	
	\noindent $\bullet $ For every $R>0$, we set $B_{R} =\left\{ x\in \mathbb{R}
	^{N}:\left| x\right| <r\right\} $.

	\noindent $\bullet $ $C_{\mathrm{c}}^{\infty }(\mathbb{R}^N )$ is the space of the
	infinitely differentiable real functions with compact support. $C_{\mathrm{c}, r}^{\infty }( \mathbb{R}^N)$ is the subspace 
	of $C_{\mathrm{c}}^{\infty }(\mathbb{R}^N)$ made of radial functions.
	
	\noindent $\bullet $ For any measurable set $A\subseteq \mathbb{R}^{N}$, $
	L^{q}(A)$ and $L_{\mathrm{loc}}^{q}(A)$ are the usual real Lebesgue spaces.
	If $\rho :A\rightarrow \Bbb{R}_{+}$ is a measurable function, then $%
	L^{p}(A,\rho \left( z\right) dz)$ is the real Lebesgue space with respect to
	the measure $\rho \left( z\right) dz$ ($dz$ stands for the Lebesgue measure
	on $\mathbb{R}^{N}$). In particular, if $K:\Bbb{R}_{+}\rightarrow \Bbb{R}
	_{+} $ is measurable, we denote $L_{K}^{q}\left( A\right) :=L^{q}\left(
	A,K\left( \left| x\right| \right) dx\right) $.
	
	\noindent $\bullet $ If $Y$ is a Banach space, $Y'$ is its dual.

\end{notations}

\section{Hypotheses and preliminary results  }

\par \noindent Throughout this paper we will assume $N\geq 3$ and the following hypotheses on $V,K$:

\begin{itemize}
	
	\item[$\left( \mathbf{H}\right) $]  $V:\mathbb{R}_{+}\rightarrow
	\left[ 0,+\infty \right) $ and $K:\mathbb{R}_{+} \rightarrow \mathbb{R}_{+} $ are continuous, and there exists $C>0$ such that for all $r \in (0,1)$ one has
	$$
	V(r) \leq \frac{C}{r^2} .
	$$

\end{itemize}

As a first thing we introduce the function that we need to define the Orlicz-Sobolev space in which we will work. Such a function is defined as the solution $f$ of the following Cauchy problem:
\begin{equation}
	\label{eq:change}
	\left\{ 
	\begin{array}{ll}
		f'(t)= \frac{1}{\sqrt{1+2f^2 (t)}} \quad \quad  \text{in }\mathbb{R} \\ 
		
		f(0)=0    
	\end{array}
	\right. 
\end{equation}
It is easy to check that this problem has a unique solution $f\in C^{\infty}(\mathbb{R}, \mathbb{R})$, which is odd, strictly increasing, and surjective (whence invertible). Other important properties of $f$ are listed in Lemma 2.1 of \cite{BGR_quasi}.
We use the function $f$ to define a suitable change of unknown, which is the following: we call $w$ the solution of (\ref{EQ}) we are looking for and we set $w= f(u)$, where $u$ is the new unknown, living in a suitable space that we are going to define. To get solutions $w$ to (\ref{EQ}) we will look for solutions $u$ to the following equation:
\begin{equation}\label{EQdual}
	-\Delta u+ V\left( \left| x\right| \right) f(u) f' (u) = K(|x|) g(f(u)) f' (u) \quad \text{in }\mathbb{R}^{N},  
\end{equation}
which will be obtained as critical points of the following functional:
\begin{equation}\label{funct1}
	I(u) = \frac{1}{2}\int_{\mathbb{R}^{N}}|\nabla u |^2 dx + \frac{1}{2}\int_{\mathbb{R}^{N}}V(|x|) f^2 (u)  dx-\int_{\mathbb{R}^{N}}K(|x|) G(f (u)) \,dx. 
\end{equation}
The critical points of $I$ and their relations with solutions of (\ref{EQ}) will be studied in the next section. In the rest of the present section we introduce the function space $E$ in which we will obtain the critical points of $I$, and we study the relevant  compactness results for $E$. All the results of this section have been proven in \cite{Uberlandio2} and \cite{BGR_quasi}.

To this aim, we introduce the space $D_r^{1,2} \left(\mathbb{R}^{N}  \right) $ as the closure of $C_{\mathrm{c}, r}^{\infty }( \mathbb{R}^{N} ) $ with respect to the norm $||u||_{1,2} := \left( \int_{\mathbb{R}^{N}} |\nabla u|^2 
dx \right)^{1/2}$. It is well known that $D_r^{1,2} \left(\mathbb{R}^{N}  \right) $ is a Hilbert space. Then we define the space $E$ as follows:
$$E = \left\{ u \in  D_r^{1,2} \left(\mathbb{R}^{N}\right) \, \Big| \, \int_{\mathbb{R}^{N}} V(|x|) f^2 (u) dx <+\infty \right\}.$$
In $E$ we introduce the Orlicz norm
$$
||u||_o = \inf_{k>0} \frac{1}{k} \left[ 1+  \int_{\mathbb{R}^{N}} V(|x|) f^2 ( k u) dx    \right] ,
$$
and then we set
$$||u || = ||u||_{1,2} + ||u||_o .$$
The space $E$ endowed with the norm $|| . ||$ is an Orlicz-Sobolev space. In the following theorems we recall some of its properties. For the proofs, and other relevant properties of $E$, see \cite{BGR_quasi}.

\begin{thm}
	$\left( E, ||.||\right)$ is a Banach space and the following continuous embedding holds true:
	$$
	E\hookrightarrow D_r^{1,2} \left(\mathbb{R}^{N}\right) .
	$$
\end{thm}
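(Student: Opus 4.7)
The embedding $E \hookrightarrow D_r^{1,2}(\mathbb{R}^N)$ is built into the definition: by construction $E \subset D_r^{1,2}(\mathbb{R}^N)$, and $\|u\|_{1,2} \leq \|u\|_{1,2} + \|u\|_o = \|u\|$ for every $u \in E$, so the canonical inclusion is continuous with operator norm $\leq 1$. The only real content is therefore that $(E, \|\cdot\|)$ is a Banach space, which I would split into checking that $\|\cdot\|_o$ is a norm on $E$ and then that the resulting space is complete.

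The norm axioms would be handled by the standard Luxemburg/Amemiya-style arguments for Orlicz norms, once one has convexity of the modular density $\Phi(t) := f^2(t)$. This convexity I would verify by direct differentiation from (\ref{eq:change}): $\Phi'(t) = 2f(t)/\sqrt{1 + 2f^2(t)}$ and then $\Phi''(t) = 2/(1 + 2f^2(t))^2 > 0$. Given this, the triangle inequality follows by the usual convex-combination trick: for $k_1, k_2 > 0$ set $k = k_1 k_2/(k_1 + k_2)$, so that $k(u+v)$ is the convex combination of $k_1 u$ and $k_2 v$ with weights $k_2/(k_1+k_2)$ and $k_1/(k_1+k_2)$; inserting the convexity inequality into the modular $\int V(|x|)f^2(\cdot)\,dx$ and taking the infima over $k_1, k_2$ independently gives $\|u+v\|_o \leq \|u\|_o + \|v\|_o$. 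Finiteness of $\|u\|_o$ on $E$, positive homogeneity, and nondegeneracy (the latter already forced by $\|\cdot\|_{1,2}$) are immediate from the definitions together with the monotonicity property $f^2(ku) \leq f^2(u)$ for $|k| \leq 1$.

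For completeness I would take a Cauchy sequence $(u_n) \subset E$: from $\|\cdot\|_{1,2} \leq \|\cdot\|$ it is Cauchy in $D_r^{1,2}(\mathbb{R}^N)$, hence converges to some $u \in D_r^{1,2}(\mathbb{R}^N)$ with, along a subsequence, $u_n \to u$ a.e. For $m, n$ large, $\|u_m - u_n\|_o$ is small, meaning there exists $k_{m,n} > 0$ realising a correspondingly small value of the Amemiya functional; applying Fatou's lemma as $m \to \infty$ along the a.e. convergent subsequence yields the analogous bound for $u_n - u$, giving both $u_n - u \in E$ (hence $u \in E$) and $\|u_n - u\|_o \to 0$. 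Combining with $D_r^{1,2}$-convergence produces $\|u_n - u\| \to 0$. The only mildly technical step is the triangle inequality for $\|\cdot\|_o$, which is precisely where the convexity of $\Phi = f^2$ matters; everything else is a routine blend of the well-known completeness of $D_r^{1,2}(\mathbb{R}^N)$ with standard Orlicz-space reasoning, and in any case is available in \cite{BGR_quasi} and \cite{Uberlandio2}.
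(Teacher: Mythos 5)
Your proposed route is the standard one, and it is worth noting that the paper itself gives no proof of this statement: it is recalled verbatim from \cite{BGR_quasi} (``For the proofs \dots see \cite{BGR_quasi}''), so the comparison can only be with that standard argument. Your main ingredients are correct: the embedding is immediate from $\|u\|_{1,2}\le\|u\|$; the computation $(f^2)''(t)=2/\bigl(1+2f^2(t)\bigr)^2>0$ is right, and the convex-combination trick with $k=k_1k_2/(k_1+k_2)$ does yield the triangle inequality for the Amemiya-type norm $\|\cdot\|_o$. However, two points are genuinely missing as written. First, the linear structure of $E$: since $E$ is defined by finiteness of the modular $\int_{\mathbb{R}^N}V(|x|)f^2(u)\,dx$ at scale $1$, convexity of $f^2$ alone does not guarantee that $u+v\in E$ or $\lambda u\in E$ for $|\lambda|>1$ (for a general convex modular the Orlicz \emph{class} is not a vector space), and your monotonicity remark $f^2(ku)\le f^2(u)$ for $|k|\le1$ only covers small scalars. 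What is needed is a $\Delta_2$-type inequality such as $|f(\lambda t)|\le\lambda\,|f(t)|$ for $\lambda\ge1$ (equivalently $t f'(t)\le f(t)$ on $[0,+\infty)$, one of the properties of $f$ listed in Lemma 2.1 of \cite{BGR_quasi}), which gives $f^2(u+v)\le\tfrac12 f^2(2u)+\tfrac12 f^2(2v)\le 2f^2(u)+2f^2(v)$ and hence closure of $E$ under the vector operations; this is also what legitimizes your step ``$u_n-u\in E$ hence $u\in E$''.

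Second, in the completeness argument the Fatou step is not immediate in the Amemiya form, because the near-optimal parameter $k_{m,n}$ depends on $m$, so you cannot pass to the limit $m\to\infty$ at a fixed scale. This is repairable: from $\tfrac1{k_{m,n}}\bigl[1+\int V f^2\bigl(k_{m,n}(u_m-u_n)\bigr)\,dx\bigr]<\varepsilon$ one gets $k_{m,n}>1/\varepsilon$, and one can either extract a subsequence with $k_{m,n}\to k_*\in[1/\varepsilon,+\infty]$ and use that $k\mapsto\tfrac1k\int V f^2(kw)\,dx$ is nondecreasing (a consequence of $t f'(t)\ge\tfrac12 f(t)$) to handle also $k_*=+\infty$, or simply switch to the equivalent Luxemburg norm, for which the Fatou argument goes through verbatim at the fixed scale $\varepsilon$. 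With these two additions--both of which rest precisely on the properties of $f$ beyond convexity of $f^2$ collected in Lemma 2.1 of \cite{BGR_quasi}--your sketch becomes a complete proof along the same lines as the one the paper cites.
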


\begin{lem}
	\label{lem:properties}
	If $u_n (x) \rightarrow u(x)$ a.e. in $\mathbb{R}^{N}$ and 
	$$
	\int_{\mathbb{R}^{N}} V(|x|) f^2 (u_n ) \, dx \rightarrow \int_{\mathbb{R}^{N}} V(|x|)  f^2 (u ) \, dx ,
	$$
	then $||u_n - u  ||_o  \rightarrow 0$.
\end{lem}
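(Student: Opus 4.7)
The plan is to prove modular convergence for the Orlicz modular $\rho(u) := \int_{\mathbb{R}^N} V(|x|) f^2(u)\,dx$ and then convert this to norm convergence via the very definition of $\|\cdot\|_o$. The argument rests on a Vitali-type dominated convergence for $\rho(k(u_n-u))$, combined with a Scheffé-type first step to produce a good dominating sequence.

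First I would set $h_n:= V(|x|)f^2(u_n)\geq 0$ and $h:=V(|x|)f^2(u)$. Since $f^2$ is continuous and $u_n \to u$ a.e., we have $h_n\to h$ a.e.\ in $\mathbb{R}^N$; the hypothesis gives $\int h_n \to \int h<+\infty$. The classical Scheffé lemma for nonnegative integrable functions (a.e.\ convergence plus convergence of integrals implies $L^1$ convergence) then yields $V(|x|)f^2(u_n)\to V(|x|)f^2(u)$ in $L^1(\mathbb{R}^N)$.

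Next, for each fixed $k>0$, I would prove the modular convergence $\int_{\mathbb{R}^N} V(|x|)f^2(k(u_n-u))\,dx\to 0$. Pointwise a.e.\ the integrand tends to $0$, by continuity of $f^2$ and $f^2(0)=0$. To pass to the limit in the integral I use the properties of $f$ collected in Lemma 2.1 of \cite{BGR_quasi}, in particular the oddness and concavity of $f$ on $[0,+\infty)$ and the $\Delta_2$-type behavior of $f^2$ (quadratic near $0$, linear at infinity), to derive a pointwise bound of the form
\[
f^2\bigl(k(u_n-u)\bigr) \leq C_k\bigl(f^2(u_n)+f^2(u)\bigr)
\]
valid for all $u_n, u \in \mathbb{R}$, with $C_k$ depending only on $k$. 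Multiplying by $V(|x|)\ge 0$ produces a dominating sequence $G_n := C_k V(|x|)(f^2(u_n)+f^2(u))$ which, by the first step, converges in $L^1(\mathbb{R}^N)$ to $2C_k V(|x|)f^2(u)$; in particular $\{G_n\}$ is uniformly integrable. The generalized dominated convergence theorem (Vitali) then gives $\int V(|x|)f^2(k(u_n-u))\,dx\to 0$.

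Finally, from the definition of the Orlicz norm, for every $k_0>0$,
\[
\|u_n-u\|_o \;\leq\; \frac{1}{k_0}\Bigl[\,1+\int_{\mathbb{R}^N} V(|x|)\,f^2\bigl(k_0(u_n-u)\bigr)\,dx\Bigr];
\]
taking $\limsup$ in $n$, the modular convergence just established yields $\limsup_n \|u_n-u\|_o \leq 1/k_0$, and letting $k_0\to+\infty$ concludes.

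The main obstacle is the domination step: one must pin down an inequality of the shape $f^2(k(a-b))\leq C_k(f^2(a)+f^2(b))$ uniformly in $a,b\in\mathbb{R}$. This requires combining a subadditive-type estimate for $|f|$ (which follows from concavity of $f$ on $[0,+\infty)$ together with its oddness) with a $\Delta_2$-dilation inequality $f^2(\lambda t)\leq C(\lambda)f^2(t)$ for all $t\in\mathbb{R}$ and $\lambda>0$, the latter reflecting that $f^2$ interpolates between the behaviors $t^2$ at the origin and $|t|$ at infinity. Once this pointwise bound is in hand, the rest is a routine combination of Scheffé and Vitali, followed by the elementary estimate on $\|\cdot\|_o$.
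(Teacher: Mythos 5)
Your proof is correct: the crucial domination $f^2\bigl(k(a-b)\bigr)\le 2\max\{1,k^2\}\bigl(f^2(a)+f^2(b)\bigr)$ does follow from the oddness, monotonicity and concavity of $f$ on $[0,+\infty)$ (which give $f(s+t)\le f(s)+f(t)$ for $s,t\ge 0$ and $f(kt)\le k\,f(t)$ for $k\ge 1$, since $f(t)/t$ is nonincreasing), the generalized dominated convergence step is then legitimate because the dominating sequence converges in $L^1$ by hypothesis, and the final passage from modular convergence to $\|u_n-u\|_o\to 0$ via the infimum defining the Orlicz norm is valid. The present note defers the proof of this lemma to \cite{BGR_quasi}, and your argument is essentially the same standard route used there (pointwise convergence plus convergence of the modulars, a $\Delta_2$-type domination to get $\int_{\mathbb{R}^N}V(|x|)f^2\bigl(k(u_n-u)\bigr)dx\to 0$ for every $k$, then the definition of $\|\cdot\|_o$), so no genuinely different method is involved.
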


We now state the main compactness results concerning the space $E$, which have been proven in \cite{BGR_quasi}. They concern the embedding properties of $E$ into the sum space 
\[
L_{K}^{q_{1}}+L_{K}^{q_{2}}:=\left\{ u_{1}+u_{2}:u_{1}\in
L_{K}^{q_{1}}\left( \Bbb{R}^{N}\right) ,\,u_{2}\in L_{K}^{q_{2}}\left( \Bbb{R%
}^{N}\right) \right\} ,\quad 1<q_{i}<\infty . 
\]
We recall from \cite{BPR} that such a space can be characterized as the set
of measurable mappings $u:\Bbb{R}^{N}\rightarrow \Bbb{R}$ for which there
exists a measurable set $A\subseteq \Bbb{R}^{N}$ such that $u\in
L_{K}^{q_{1}}\left( A\right) \cap L_{K}^{q_{2}}\left( A^{c}\right) $. It is
a Banach space with respect to the norm 
\[
\left\| u\right\| _{L_{K}^{q_{1}}+L_{K}^{q_{2}}}:=\inf_{u_{1}+u_{2}=u}\max
\left\{ \left\| u_{1}\right\| _{L_{K}^{q_{1}}(\Bbb{R}^{N})},\left\|
u_{2}\right\| _{L_{K}^{q_{2}}(\Bbb{R}^{N})}\right\} 
\]
and the continuous embedding $L_{K}^{q}\hookrightarrow
L_{K}^{q_{1}}+L_{K}^{q_{2}}$ holds true for all $q\in \left[ \min \left\{
q_{1},q_{2}\right\}, \max \left\{ q_{1},q_{2}\right\} \right] $. 
The first compactness result is Theorem \ref{THM(cpt)} below. Its assumptions are rather general but not so easy to check, so more handy conditions ensuring such assumptions will be provided in the next Theorems \ref{THM0} and \ref{THM1}. To state the results, we need to preliminarly introduce the following functions of $R>0$ and $q>1$:
\begin{eqnarray}
	\mathcal{S}_{0}\left( q,R\right)&:=&
	\sup_
	{u\in E,\,
		\left\| u\right\| =1  }
	\int_{B_{R}}K\left( \left| x\right| \right)
	\left| u\right| ^{q}dx,  \label{S_o :=}
	\\
	\mathcal{S}_{\infty }\left( q,R\right)&:=&
	\sup_
	{u\in E,\,
		\left\| u\right\| =1  } 
	\int_{\mathbb{R}%
		^{N}\setminus B_{R}}K\left( \left| x\right| \right) \left| u\right| ^{q}dx.
	\label{S_i :=}
\end{eqnarray}
Clearly $\mathcal{S}_{0}\left( q,\cdot \right) $ is nondecreasing, $\mathcal{
	S}_{\infty }\left( q,\cdot \right) $ is nonincreasing and both of them can
be infinite at some $R$.

\begin{thm}
	\label{THM(cpt)} Let $N\geq 3$, let $V$ and $K$ be as in $\left( \mathbf{H}\right) $ and let $q_{1},q_{2}>1$.
	If 
	\begin{equation}\label{eqcompact}
		\lim_{R\rightarrow 0^{+}}\mathcal{S}_{0}\left( q_{1},R\right)
		=\lim_{R\rightarrow +\infty }\mathcal{S}_{\infty }\left( q_{2},R\right) =0, 
	\end{equation}
	then $E$ is compactly embedded into $L_{K}^{q_{1}}(\mathbb{R}^{N})+L_{K}^{q_{2}}(\mathbb{R}^{N})$.
	
\end{thm}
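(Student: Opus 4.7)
My plan is to run the standard cut-off argument at small scale $r\to 0^{+}$ and at large scale $R\to+\infty$, using (\ref{eqcompact}) to kill the contributions of $K|u|^{q_i}$ near the origin and near infinity, and Rellich-type compactness on bounded annuli for the middle portion. Concretely, let $(u_n)\subset E$ be a bounded sequence with $\|u_n\|\le M$. By the continuous embedding $E\hookrightarrow D^{1,2}_r(\mathbb{R}^N)$, $(u_n)$ is bounded in $D^{1,2}_r$ and, up to a subsequence, I may assume $u_n\rightharpoonup u$ in $D^{1,2}_r$, $u_n\to u$ almost everywhere in $\mathbb{R}^N$, and $u_n\to u$ in $L^q(B_R\setminus\overline{B_r})$ for every $0<r<R$ and every $q\ge 1$ (Rellich on the bounded annulus for $q<2^{*}$, supplemented by the Strauss pointwise bound $|u(x)|\le C|x|^{(2-N)/2}\|u\|_{D^{1,2}}$ and dominated convergence to cover $q\ge 2^{*}$). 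A Fatou argument applied to $V(|x|)f^2(ku)$, combined with Lemma \ref{lem:properties}, then gives $u\in E$ with $\|u\|\le M$, so that $\|u_n-u\|\le 2M$ uniformly in $n$.

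By homogeneity of $\|\cdot\|$, the definitions (\ref{S_o :=}) and (\ref{S_i :=}) yield, for every $v\in E$ and every $R>0$,
\begin{equation*}
\int_{B_R}K(|x|)|v|^{q_1}\,dx\le\mathcal{S}_0(q_1,R)\|v\|^{q_1},\qquad\int_{\mathbb{R}^N\setminus B_R}K(|x|)|v|^{q_2}\,dx\le\mathcal{S}_\infty(q_2,R)\|v\|^{q_2}.
\end{equation*}
Given $\varepsilon>0$, I first use (\ref{eqcompact}) to pick $r_0\in(0,1)$ and $R_0>r_0$ with $(2M)^{q_1}\mathcal{S}_0(q_1,r_0)<\varepsilon^{q_1}$ and $(2M)^{q_2}\mathcal{S}_\infty(q_2,R_0)<\varepsilon^{q_2}$. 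With $r_0,R_0$ fixed, $K$ is bounded on the compact annulus $\overline{B_{R_0}}\setminus B_{r_0}$ by continuity, so the $L^{q_1}$-convergence on the annulus supplies $n$ large enough that $\int_{B_{R_0}\setminus B_{r_0}}K(|x|)|u_n-u|^{q_1}\,dx<\varepsilon^{q_1}$. Decomposing $u_n-u=(u_n-u)\chi_{B_{R_0}}+(u_n-u)\chi_{\mathbb{R}^N\setminus B_{R_0}}$, the first summand is controlled in $L^{q_1}_K(\mathbb{R}^N)$ by $C\varepsilon$ (splitting $B_{R_0}=B_{r_0}\cup(B_{R_0}\setminus B_{r_0})$ and applying the two estimates above), while the second is controlled in $L^{q_2}_K(\mathbb{R}^N)$ by $\varepsilon$. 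By the $\max$-based definition of the sum-space norm, $\|u_n-u\|_{L^{q_1}_K+L^{q_2}_K}\le C\varepsilon$ for all $n$ sufficiently large, which proves the compact embedding.

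The step I expect to be the main obstacle is the uniform bound $\|u_n-u\|\le 2M$, specifically for the Orlicz piece $\|\cdot\|_o$: passing to the limit $\|u\|_o\le\liminf_n\|u_n\|_o$ from a.e.\ convergence is not automatic for an infimum-type norm and has to be obtained by combining Fatou's lemma on the modular $\int V(|x|)f^2(ku_n)\,dx$ with Lemma \ref{lem:properties}. Once this is in place, the three-piece split is a routine computation and hypothesis $(\mathbf{H})$ enters only through the continuity of $K$ (needed to bound $K$ on compact annuli) and through the embedding $E\hookrightarrow D^{1,2}_r$ used at the outset.
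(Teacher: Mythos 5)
Your overall architecture is the expected one for this statement (the note itself gives no proof of Theorem \ref{THM(cpt)}, deferring to \cite{BGR_quasi}): split $\mathbb{R}^N$ into a small ball, a compact annulus and an exterior region, kill the first and last contributions via $\mathcal{S}_0(q_1,\cdot)$, $\mathcal{S}_\infty(q_2,\cdot)$ and \eqref{eqcompact}, and handle the annulus by Rellich/radial-lemma compactness together with the continuity (hence local boundedness away from $0$) of $K$. The scaling estimates $\int_{B_R}K|v|^{q_1}dx\le\mathcal{S}_0(q_1,R)\|v\|^{q_1}$ are legitimate, since $\|\cdot\|_o$ is positively homogeneous (substitute $k\mapsto k|\lambda|$ in the infimum, using that $f^2$ is even), and your use of the Strauss pointwise bound to cover exponents $\ge 2^*$ on the annulus is correct and necessary, because $q_1$ may exceed $2^*$.

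The gap is exactly in the step you flagged, and your proposed justification does not close it. To get $\|u_n-u\|\le 2M$ you need $\|u\|_o\le\liminf_n\|u_n\|_o$, but Fatou applied to the modular at each \emph{fixed} $k$ only gives $\tfrac1k\bigl(1+\int V f^2(ku)\,dx\bigr)\le\liminf_n\tfrac1k\bigl(1+\int V f^2(ku_n)\,dx\bigr)$, and taking $\inf_k$ yields $\inf_k\liminf_n(\cdots)\ge\liminf_n\inf_k(\cdots)=\liminf_n\|u_n\|_o$, i.e.\ the inequality points the wrong way, so nothing follows; moreover Lemma \ref{lem:properties} is not the right tool, since it converts convergence of modulars into norm convergence and says nothing about lower semicontinuity. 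Two repairs are available. (a) Prove the Fatou property of this Amemiya-type norm honestly: choose near-optimal $k_n$ for $u_n$, note $k_n\ge 1/(M+1)$, and distinguish the cases $k_n$ bounded (pass to $k_n\to k_*>0$ and apply Fatou at $k_*$) and $k_n\to\infty$ (use $f^2(t)\ge c|t|$ for $|t|\ge1$ to deduce $\int V|u|\,dx<\infty$, hence $u\in E$ and $\|u\|_o<\infty$ via $f^2(t)\le\sqrt2\,|t|$); note that for your argument a finite bound on $\|u\|$ suffices, not $\|u\|\le M$. (b) Simpler: bypass the bound on $\|u_n-u\|$ altogether by applying the $\mathcal{S}$-estimates to $u_n$ and to $u$ separately, using Fatou on the weighted integrals, $\int_{B_{r_0}}K|u|^{q_1}dx\le\liminf_n\int_{B_{r_0}}K|u_n|^{q_1}dx\le\mathcal{S}_0(q_1,r_0)M^{q_1}$ and the analogous estimate outside $B_{R_0}$, and then the triangle inequality in $L^{q_1}_K(B_{r_0})$ and in $L^{q_2}_K(\mathbb{R}^N\setminus B_{R_0})$; with this modification the rest of your argument goes through unchanged.
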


\noindent We notice that assumption \eqref{eqcompact}
can hold with $q_{1}=q_{2}=q$ and therefore Theorem \ref{THM(cpt)} also
concerns the compact embedding properties of $E$ into $L_{K}^{q}$, $1<q<\infty $.

We now look for explicit conditions on $V$ and $K$ implying \eqref{eqcompact} for some $q_{1}$ and $q_{2}$. More
precisely, in Theorem \ref{THM0} we will find a range of exponents $
q_{1} $ such that $\lim_{R\rightarrow 0^{+}}\mathcal{S}_{0}\left(q_{1},R\right)$ $=0$, while 
in Theorem \ref{THM1} we will do the same for exponents $q_{2}$ such that
$\lim_{R\rightarrow +\infty}\mathcal{S}_{\infty }\left( q_{2},R\right) =0$.

For $\alpha \in \mathbb{R}$, $\beta \in \left[ 0,1\right] $, we define two functions 
$q_0^{*} \left( \alpha ,\beta\right) $ and $q_{\infty}^{*}\left( \alpha ,\beta\right)$ by setting 
$$\quad q_0^{*}\left( \alpha ,\beta \right) :=\frac{2 \alpha  +2 N - \beta (N+2)}{N-2}, 
\quad \quad q_{\infty}^{*}\left( \alpha ,\beta \right) :=2 \, \frac{ \alpha  + N - 2\beta }{N-2}.
$$

\begin{thm}
	\label{THM0}
	Let $V$, $K$ be as in $\left( \mathbf{H}\right) $.
	Assume that there exists $R_{1}>0$ such that
	\begin{equation} \label{esssup in 0}
		\sup_{r\in \left( 0,R_{1}\right) }\frac{K\left( r\right) }{%
			r^{\alpha _{0}}V\left( r\right) ^{\beta _{0}}}<+\infty \quad \text{for some }%
		0\leq \beta _{0}\leq 1\text{~and }\alpha _{0} \in \mathbb{R} . 
	\end{equation}
	Assume also that 
	$$
	\max \left\{ 1,2\beta _{0}\right\} <q_0^{*}  \left( \alpha _{0},\beta
	_{0}\right). 
	$$
	Then
	$\displaystyle \lim_{R\rightarrow 0^{+}}\mathcal{S}_{0}\left(
	q_{1},R\right) =0$ for every $q_{1}\in \mathbb{R}$ such that 
	\begin{equation}
		\max \left\{ 1,2\beta _{0}\right\} <q_{1}<q_0^{*}  \left( \alpha _{0},\beta
		_{0}\right) .  \label{th1}
	\end{equation}
\end{thm}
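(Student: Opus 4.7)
The plan is to reduce the weighted integral to a purely-power integral by combining the pointwise growth controls on $V$ and $K$ with the classical pointwise radial estimate for $D^{1,2}_r(\mathbb{R}^N)$. Set $R_0 := \min\{R_1, 1\}$. For $r \in (0, R_0)$ the assumption \eqref{esssup in 0} gives $K(r) \leq C_0 \, r^{\alpha_0} V(r)^{\beta_0}$, while $(\mathbf{H})$ yields $V(r) \leq C/r^2$. Raising the second bound to the power $\beta_0 \in [0,1]$ and multiplying produces the clean power estimate $K(r) \leq C_1 \, r^{\alpha_0 - 2\beta_0}$ on $(0, R_0)$, with $C_1 := C_0 C^{\beta_0}$. (When $\beta_0 = 0$ this is just the hypothesis itself.)

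The second ingredient is the radial pointwise estimate: for every radial $u \in D_r^{1,2}(\mathbb{R}^N)$ one has $|u(x)| \leq c_N |x|^{-(N-2)/2} \|u\|_{1,2}$ for all $x \neq 0$. Since $E \hookrightarrow D_r^{1,2}(\mathbb{R}^N)$, this applies to every $u \in E$ with $\|u\| = 1$, in which case $\|u\|_{1,2} \leq 1$ and hence $|u(x)|^{q_1} \leq c_N^{q_1} |x|^{-q_1(N-2)/2}$. Combining these two pointwise controls and integrating over $B_R$ for $R \leq R_0$,
\[
\int_{B_R} K(|x|) \, |u|^{q_1} \, dx \;\leq\; C_2 \int_{B_R} |x|^{\alpha_0 - 2\beta_0 - q_1(N-2)/2} \, dx \;=\; C_3 \, R^{\delta},
\]
where $\delta := N + \alpha_0 - 2\beta_0 - q_1(N-2)/2$ and $C_2, C_3$ depend only on $N, C, C_0, \alpha_0, \beta_0, q_1$, provided $\delta > 0$ so that the integral converges.

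A direct algebraic check shows that the hypothesis $q_1 < q_0^*(\alpha_0, \beta_0) = (2\alpha_0 + 2N - \beta_0(N+2))/(N-2)$ already implies (with room to spare, since $\beta_0 \geq 0$) the weaker inequality $q_1 < 2(\alpha_0 + N - 2\beta_0)/(N-2)$, which rearranges exactly to $\delta > 0$. Taking the supremum over $u \in E$ with $\|u\| = 1$ and letting $R \to 0^+$ then gives $\mathcal{S}_0(q_1, R) \leq C_3 R^\delta \to 0$, which is the desired conclusion. The main subtlety is the exponent accounting: the pointwise radial estimate is somewhat crude in that it discards the Orlicz component $\|u\|_o$ of the norm, but the admissible range of $q_1$ so obtained already covers the range stated in the theorem, so no finer interpolation using $\int V(|x|) f^2(u) \, dx$ is needed; the lower bound $q_1 > \max\{1, 2\beta_0\}$ enters only through the requirement that the admissible interval be nonempty.
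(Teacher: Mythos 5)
Your proof is correct, and it takes a different route from the one the paper relies on. The paper does not prove Theorem \ref{THM0} here at all: it refers to \cite{BGR_quasi} (and \cite{Uberlandio2}), where the corresponding estimate is obtained by an interpolation argument that keeps the factor $V(r)^{\beta_0}$ and absorbs it through H\"older's inequality into the Orlicz part $\int_{\mathbb{R}^N}V(|x|)f^2(u)\,dx$ of the norm, using the behaviour $f(t)^2\sim |t|$ for large $|t|$; that mechanism is precisely what produces the particular exponent $q_0^{*}(\alpha_0,\beta_0)=\frac{2\alpha_0+2N-\beta_0(N+2)}{N-2}$ and the lower restriction $q_1>\max\{1,2\beta_0\}$, and it works without any upper bound on $V$ near the origin. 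You instead exploit the bound $V(r)\le C r^{-2}$ on $(0,1)$, which is exactly the new ingredient of hypothesis $(\mathbf{H})$ in this note, to convert $K(r)\lesssim r^{\alpha_0}V(r)^{\beta_0}$ into the pure power bound $K(r)\lesssim r^{\alpha_0-2\beta_0}$, and then conclude with the classical radial (Strauss--Ni) estimate $|u(x)|\le c_N|x|^{-(N-2)/2}\|u\|_{1,2}$; in effect you reduce to the case $\beta_0=0$ with $\alpha_0$ replaced by $\alpha_0-2\beta_0$. Your exponent bookkeeping is right: $\delta>0$ is equivalent to $q_1<\frac{2(\alpha_0+N-2\beta_0)}{N-2}=q_0^{*}(\alpha_0,\beta_0)+\beta_0$, so your admissible range strictly contains the stated one (and needs no lower bound on $q_1$), hence the theorem follows. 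What each approach buys: yours is shorter, self-contained, and even slightly stronger under $(\mathbf{H})$, but it discards the Orlicz norm entirely and therefore depends essentially on the $r^{-2}$ control of $V$ near $0$, so it cannot reproduce the cited result in the generality of \cite{BGR_quasi}, where $V$ may be more singular; the only external input you use is the radial pointwise lemma, which you should state or cite explicitly (it holds a.e.\ for every $u\in D^{1,2}_r(\mathbb{R}^N)$ by density), but that is a standard tool also underlying the proofs in the references.
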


\begin{thm}
	\label{THM1}
	Let $V$, $K$ be as in $\left( \mathbf{H}\right) $.
	Assume that there exists $R_{2}>0$ such that
	\begin{equation}
		\sup_{r>R_{2}}\frac{K\left( r\right) }{r^{\alpha _{\infty
			}}V\left( r\right) ^{\beta _{\infty }}}<+\infty \quad \text{for some }0\leq
		\beta _{\infty }\leq 1\text{~and }\alpha _{\infty }\in \mathbb{R}.
		\label{esssup all'inf}
	\end{equation}
	Then $\displaystyle \lim_{R\rightarrow +\infty }\mathcal{S}_{\infty }\left(
	q_{2},R\right) =0$ for every $q_{2}\in \mathbb{R}$ such that 
	\begin{equation}
		q_{2}>\max \left\{ 1,2\beta _{\infty },q_{\infty}^{*}\left(\alpha _{\infty },\beta
		_{\infty }\right) \right\} .  \label{th2}
	\end{equation}
\end{thm}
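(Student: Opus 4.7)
The plan is to directly estimate the integrand appearing in $\mathcal{S}_\infty(q_2,R)$, uniformly over the unit sphere of $E$, combining two ingredients: the replacement of $K$ provided by hypothesis (\ref{esssup all'inf}), and the Strauss-type pointwise decay $|u(x)| \leq c_N |x|^{-(N-2)/2} \|u\|_{1,2}$ inherited from the continuous embedding $E \hookrightarrow D^{1,2}_r(\mathbb{R}^N)$. Concretely, for any $R \geq R_2$ and any $u \in E$ with $\|u\|=1$, (\ref{esssup all'inf}) yields a constant $C>0$ with
\begin{equation*}
\int_{\mathbb{R}^{N}\setminus B_{R}} K(|x|)\,|u|^{q_2}\,dx \;\leq\; C \int_{\mathbb{R}^{N}\setminus B_{R}} |x|^{\alpha_\infty}\,V(|x|)^{\beta_\infty}\,|u|^{q_2}\,dx.
\end{equation*}

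Using the hypothesis $q_2 > 2\beta_\infty$, I would split $|u|^{q_2} = |u|^{q_2-2\beta_\infty}\,(u^2)^{\beta_\infty}$, bound the first factor pointwise by Strauss (contributing $|x|^{-(q_2-2\beta_\infty)(N-2)/2}$ and a power of $\|u\|_{1,2}\leq 1$), and thus be reduced to estimating $\int_{|x|>R} |x|^{\gamma}\,(Vu^2)^{\beta_\infty}\,dx$ with $\gamma := \alpha_\infty - (q_2-2\beta_\infty)(N-2)/2$. When $0<\beta_\infty<1$, H\"older's inequality with conjugate exponents $1/\beta_\infty$ and $1/(1-\beta_\infty)$ peels off a pure weight factor $\bigl(\int_{|x|>R}|x|^{\gamma/(1-\beta_\infty)}\,dx\bigr)^{1-\beta_\infty}$, which is finite and tends to $0$ as $R\to+\infty$ exactly when $\gamma/(1-\beta_\infty)<-N$; a short algebraic rearrangement shows that this inequality is equivalent to $q_2 > q_\infty^*(\alpha_\infty,\beta_\infty)$. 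The boundary cases $\beta_\infty=0$ (apply Strauss to the full power $|u|^{q_2}$, no H\"older needed) and $\beta_\infty=1$ (bound $|x|^\gamma$ by its supremum $R^\gamma$ on $\{|x|>R\}$, which vanishes precisely when $\gamma<0$, i.e.\ $q_2>q_\infty^*$) are dispatched analogously.

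The main obstacle is controlling the residual factor $\int_{|x|>R} V u^2\,dx$ left by the H\"older step, because the $E$-norm only governs $\int V f^2(u)\,dx$ and the pointwise inequality $f^2(u)\leq u^2$ (immediate from $f'\leq 1$) goes the wrong way. To get around this I would invoke Strauss a second time: for $\|u\|_{1,2}\leq 1$ one has $|u(x)|\leq c_N R^{-(N-2)/2}$ on $|x|>R$, so enlarging $R$ once and for all (depending only on $N$) forces $|u|\leq 1$ pointwise on the annulus. Since $f(0)=0$ and $f'(0)=1$, integrating the ODE in (\ref{eq:change}) on $[0,1]$ provides a constant $c_0>0$ with $f^2(t)\geq c_0\,t^2$ for $|t|\leq 1$, and therefore $\int_{|x|>R} V u^2\,dx \leq c_0^{-1}\int_{\mathbb{R}^N} V f^2(u)\,dx$, the latter being bounded on the unit sphere of $E$ by the defining properties of the Orlicz norm $\|\cdot\|_o$. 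Assembling these bounds and letting $R\to+\infty$ yields $\mathcal{S}_\infty(q_2,R)\to 0$, which is the desired conclusion.
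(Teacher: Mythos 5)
Your argument is correct: the algebra matching $\gamma/(1-\beta_\infty)<-N$ (resp. $\gamma<0$, resp. $\alpha_\infty-q_2(N-2)/2<-N$) with $q_2>q_\infty^*(\alpha_\infty,\beta_\infty)$ checks out, and the key obstacle $\int_{|x|>R}Vu^2\,dx$ is legitimately handled by the radial-lemma bound $|u|\le c_N R^{-(N-2)/2}$ forcing $|u|\le 1$ outside a large ball, whence $u^2\le c_0^{-1}f^2(u)$ and the Orlicz norm (together with convexity of $f^2$) controls $\int V f^2(u)\,dx$ on the unit sphere. This is essentially the same route as the proof the paper defers to \cite{BGR_quasi} (and its antecedents): replace $K$ by $r^{\alpha_\infty}V^{\beta_\infty}$, apply the Strauss-type radial estimate in $D^{1,2}_r$, use H\"older with exponents $1/\beta_\infty$, $1/(1-\beta_\infty)$, and pass from $Vu^2$ to $Vf^2(u)$ via the smallness of $u$ at infinity.
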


\begin{rem}
	We mean $V\left( r\right) ^{0}=1$ for every $r$
	(even if $V\left( r\right) =0$). In particular, if $V\left( r\right) =0$ for $r>R_{2}$, then Theorem \ref{THM1} can be applied with $\beta
	_{\infty }=0$ and assumption (\ref{esssup all'inf}) means 
	\[
	\esssup_{r>R_{2}}\frac{K\left( r\right) }{r^{\alpha _{\infty }}}%
	<+\infty \quad \text{for some }\alpha _{\infty }\in \mathbb{R}.
	\]
	Similarly for Theorem \ref{THM0} and assumption (\ref{esssup in 0}), if $%
	V\left( r\right) =0$ for $r\in \left( 0,R_{1}\right) $.
\end{rem}


\section{The functional I \label{SEC: funct}}

In this section we study the functional $I$, defined in (\ref{funct1}), whose critical points will give rise to solutions to \eqref{EQdual}, and thus to \eqref{EQ}. 
To this aim, we need a set of hypotheses on the nonlinearity $g$. 

First of all we make the following assumptions, to be maintained from now on:

\begin{itemize}
	\item[$ \bf (h_1 )$] we assume both hypotheses  (\ref{esssup in 0}) and (\ref{esssup all'inf}) of Theorems \ref{THM0} and \ref{THM1};

	\item[ $\bf (h_2 )$] we let $q_1 , q_2 \in \mathbb{R}$ be such that $4 < q_1 < 2 q_0^* (\alpha_0 , \beta_0 ) $ and $q_2 > \max \left\{ 4, 2 q_{\infty}^* (\alpha_{\infty} , \beta_{\infty} ) \right\}$.

\end{itemize}

\noindent Then we introduce the conditions we will use about the continuous $g$, meaning $G(t)= \int_0^t g(s) \, ds$:

\begin{itemize}

	\item[${\bf \left( g_{1}\right) } $] $g : \mathbb{R} \rightarrow \mathbb{R}$ is a continuous function;
	
	\item[${\bf \left( g_{2}\right) } $]  $\exists \theta >2$ such that $0\leq 2\theta
	G\left( t\right) \leq g\left( t\right) t$ for all $t\in \mathbb{R}$;
	
	\item[${\bf  \left( g_{3}\right)} $]  $\exists t_{0}>0$ such that $G\left(t_{0}\right) >0$;
	
	\item[${\bf  \left( g_{q_{1},q_{2}}\right) }$]  $\exists C>0$ such that
	$\left|g\left( t\right) \right| \leq C \min \left\{ \left| t\right|
	^{q_{1}-1},\left| t\right| ^{q_{2}-1}\right\} $ for all $t\in \mathbb{R}$.

\end{itemize}

\noindent We notice that these hypotheses imply that $q_1, q_2 \geq 2\theta $, and that there exists $C>0$ such that the following estimate holds for all $t\in \mathbb{R}$: 
\begin{equation}\label{estimG}
	\left|G\left( t\right) \right| \leq C \min \left\{ \left| t\right|
	^{q_{1}},\left| t\right| ^{q_{2}}\right\}.
\end{equation}
We also observe that, if $q_{1}\neq q_{2}$, the double-power growth
condition ${\bf \left( g_{q_{1},q_{2}}\right)} $ is more stringent than the more
usual single-power one, since it implies $|g(t)| \leq C |t|^{q-1}$
for $q=q_{1}$, $q=q_{2}$ and every $q$
in between. On the other hand, we will never require $q_{1}\neq q_{2}$ in ${\bf \left( g_{q_{1},q_{2}}\right)} $, so that our results will also concern
single-power nonlinearities as long as we can take $q_{1}=q_{2}$.
%

We begin with the following lemma.

\begin{lem}\label{embedmezzi}
	Assume $ \bf (h_1 )$, $ \bf (h_2 )$. Then $E $ is compaclty embedded into $L_K^{q_1/2} \left( \mathbb{R}^N \right) + L_K^{q_2/2} \left( \mathbb{R}^N \right)$.
\end{lem}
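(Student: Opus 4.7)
The strategy is simply to apply the abstract compactness result Theorem \ref{THM(cpt)} with the halved exponents $q_1/2$ and $q_2/2$ in place of $q_1$ and $q_2$, and to verify its hypotheses via the concrete criteria Theorems \ref{THM0} and \ref{THM1}. The only content of the proof is checking that the numerical bounds collected in $(\mathbf{h_2})$ translate, after dividing by $2$, into the admissible ranges for the exponents required by Theorems \ref{THM0} and \ref{THM1}.

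First I would observe that since $q_1/2 > 2 > 1$ and $q_2/2 > 2 > 1$, Theorem \ref{THM(cpt)} is applicable with exponents $q_1/2$ and $q_2/2$. So it suffices to prove
\[
\lim_{R\to 0^+}\mathcal{S}_0(q_1/2,R)=0 \quad \text{and} \quad \lim_{R\to +\infty}\mathcal{S}_\infty(q_2/2,R)=0.
\]

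For the first limit I would invoke Theorem \ref{THM0}, whose structural assumption \eqref{esssup in 0} is part of $(\mathbf{h_1})$. It remains to check $\max\{1,2\beta_0\}<q_1/2<q_0^*(\alpha_0,\beta_0)$. The upper bound is immediate from $(\mathbf{h_2})$, which gives $q_1<2q_0^*(\alpha_0,\beta_0)$. For the lower bound, $\beta_0\in[0,1]$ forces $\max\{1,2\beta_0\}\leq 2$, while $(\mathbf{h_2})$ gives $q_1/2>2$; this also ensures the preliminary inequality $\max\{1,2\beta_0\}<q_0^*(\alpha_0,\beta_0)$ of Theorem \ref{THM0}. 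Analogously, for the second limit I would apply Theorem \ref{THM1}, whose hypothesis \eqref{esssup all'inf} is also granted by $(\mathbf{h_1})$; one just has to check $q_2/2>\max\{1,2\beta_\infty,q_\infty^*(\alpha_\infty,\beta_\infty)\}$, which again follows by halving the two bounds in $(\mathbf{h_2})$ together with $\beta_\infty\in[0,1]$.

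There is no real obstacle here: the lemma is a direct corollary of the three theorems cited, and the factor $2$ between the exponents $q_i$ in $(\mathbf{h_2})$ and the exponents $q_i/2$ appearing in the target sum space is exactly what guarantees a comfortable margin $q_i/2>2$ that absorbs the $2\beta$ terms. Combining the two limits via Theorem \ref{THM(cpt)} yields the claimed compact embedding $E\hookrightarrow L_K^{q_1/2}(\mathbb{R}^N)+L_K^{q_2/2}(\mathbb{R}^N)$.
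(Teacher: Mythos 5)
Your proof is correct and follows exactly the paper's own argument: the paper likewise notes that $(\mathbf{h_2})$ gives that $q_1/2$ and $q_2/2$ satisfy \eqref{th1} and \eqref{th2}, and then concludes via Theorems \ref{THM0}, \ref{THM1} and \ref{THM(cpt)}. You merely spell out the routine numerical verification (the margin $q_i/2>2\geq\max\{1,2\beta\}$) that the paper leaves implicit.
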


\begin{proof}
	The hypothesis $\bf \left( h_2 \right)$ easily gives that $q_1 /2$ and $q_2 /2$ satisfy  (\ref{th1}) and  (\ref{th2}). Together with $\bf \left( h_1 \right)$, this implies that the hypotheses of Theorems \ref{THM0} and \ref{THM1} are satisfied. Together with Theorem \ref{THM(cpt)}, this gives the result.
\end{proof}


We now state the main properties of the functional $I:E\rightarrow \mathbb{R}$ defined by (\ref{funct1}).

\begin{thm} 
	\label{THM:critical}
	Let $N\geq 3$ and assume $\left( \mathbf{H}\right) $, $(\bf h_1 )$, $(\bf h_2 )$. Assume that $g: {\mathbb{R}} \rightarrow {\mathbb{R}}$ 
	satisfies ${\bf \left( g_{1}\right) } $, ${\bf \left( g_{2}\right) } $, ${\bf \left( g_{3}\right) } $ and ${\bf  \left( g_{q_{1},q_{2}}\right) }$. Then 
	\begin{itemize}
		\item[(1)] $I$ is well defined and continuous in $E$;
		
		\item[(2)] $I$ is a $C^1$ map on $E$ and for all $u \in E$ its differential $I' (u)$ is given by
		\begin{align} 
			I' (u) h  = & \int_{\mathbb{R}^{N}}\nabla u \nabla h dx + \int_{\mathbb{R}^{N}}V(|x|) f (u) f'(u) h  dx \label{diffrechet}\\
			&-\int_{\mathbb{R}^{N}}K(|x|) g(f (u)) f'(u) h\,dx \nonumber
		\end{align}
		for all $h \in E$.
		
	\end{itemize}
	
\end{thm}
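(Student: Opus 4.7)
The strategy is to split the functional as $I=T+J-\Phi$ with
\[
T(u) := \tfrac{1}{2}\int_{\mathbb{R}^N}|\nabla u|^2\,dx,\quad J(u) := \tfrac{1}{2}\int_{\mathbb{R}^N}V(|x|)f^2(u)\,dx,\quad \Phi(u) := \int_{\mathbb{R}^N}K(|x|)G(f(u))\,dx,
\]
and treat each piece separately. The Dirichlet term $T$ is classically $C^1$ on $D^{1,2}_r(\mathbb{R}^N)$ with $T'(u)h = \int_{\mathbb{R}^N} \nabla u\cdot\nabla h\,dx$, and the continuous embedding $E\hookrightarrow D^{1,2}_r(\mathbb{R}^N)$ makes it $C^1$ on $E$ with the same derivative.

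For $J$, well-definedness on $E$ is built into the very definition of the Orlicz--Sobolev space. To show continuity at $u\in E$ I would take $u_n\to u$ in $E$, extract a subsequence converging almost everywhere (via $E\hookrightarrow D^{1,2}_r$), and combine $\|u_n-u\|_o\to 0$ with Lemma~\ref{lem:properties} to deduce $\int V f^2(u_n)\to\int V f^2(u)$. The Gateaux derivative in direction $h$ comes from the identity $\tfrac{d}{dt}f^2(u+th) = 2f(u+th)f'(u+th)h$ and dominated convergence, the crucial pointwise bound being $|f(t)f'(t)|\leq \min\{|f(t)|,\tfrac{1}{\sqrt{2}}\}$, which is immediate from (\ref{eq:change}). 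Boundedness of $J'(u)$ as a linear functional on $E$ and continuity of $u\mapsto J'(u)$ from $E$ into $E'$ are then deduced from a Cauchy--Schwarz estimate together with the same pointwise bounds and the definition of the Orlicz norm.

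The main work concerns $\Phi$, and the crucial input is Lemma~\ref{embedmezzi}, which gives the compact embedding $E\hookrightarrow L^{q_1/2}_K+L^{q_2/2}_K$; this is precisely why $(\mathbf{h_2})$ has been formulated with the extra factor of two in $q_1<2q_0^*$ and $q_2>2q_\infty^*$. From (\ref{estimG}) and the standard asymptotic behavior of $f$ (namely $f(t)\sim t$ at zero and $|f(t)|\lesssim |t|^{1/2}$ at infinity, given in Lemma~2.1 of \cite{BGR_quasi}), I would derive pointwise estimates of the form $|G(f(u))|\leq C\,|u|^{q_i/2}$ in the appropriate range of $|u|$, yielding both $\Phi(u)\in\mathbb{R}$ and a norm bound of $\Phi(u)$ by the $L^{q_1/2}_K+L^{q_2/2}_K$-norm of $u$. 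Continuity of $\Phi$ then follows by a Vitali-type argument: pass to a subsequence with $u_n\to u$ a.e.\ and exploit the compact embedding to get uniform integrability of $\{K(|x|)|G(f(u_n))|\}$. The Gateaux derivative (\ref{diffrechet}) is obtained analogously, using the extra bound $|f'(t)|\leq 1$ and $(\mathbf{g_{q_1,q_2}})$ to control $|g(f(u))f'(u)|$, and continuity of $u\mapsto \Phi'(u)$ from $E$ into $E'$ is proved by the same kind of uniform-integrability argument.

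The principal obstacle is this final step: Fréchet differentiability of $\Phi$ and norm continuity of $\Phi'$ require one to match the double-power growth of $g\circ f$ against the sum Lebesgue space $L^{q_1/2}_K+L^{q_2/2}_K$, with particular care needed when the support of the test function $h$ straddles regions where $|u|$ is small and regions where it is large (so that the two exponents $q_1/2$ and $q_2/2$ must be used on complementary pieces, as in the definition of the sum norm). The technical handling of $J$ is, by contrast, routine once Lemma~\ref{lem:properties} and the bounds on $f,f'$ stemming from (\ref{eq:change}) are available.
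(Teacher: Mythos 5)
Your decomposition and overall route are the same as the paper's: the splitting $I=I_1+I_2-I_3$, the standard Dirichlet term, the reduction of the nonlinear term to double-power growth in $|u|$ paired with the embedding $E\hookrightarrow L^{q_1/2}_K+L^{q_2/2}_K$ of Lemma \ref{embedmezzi} (the paper then simply quotes \cite[Proposition 3.8]{BPR} instead of redoing a Vitali-type argument), and a treatment of the $V$-term deferred to the properties of $f$. However, there is a genuine gap in your estimate for the derivative term of $I_3$. You propose to control $g(f(u))f'(u)$ using only $|f'(t)|\le 1$, $(g_{q_1,q_2})$ and $|f(t)|\lesssim\min\{|t|,|t|^{1/2}\}$; this yields $|g(f(t))f'(t)|\le C|f(t)|^{q_i-1}\le C|t|^{(q_i-1)/2}$ for $|t|\ge 1$, i.e.\ the exponent $q_2/2-1/2$ on the region where $|u|$ is large, whereas the H\"older/duality pairing against $h\in L^{q_1/2}_K+L^{q_2/2}_K$ requires the exponent $q_2/2-1$ there: with your bound one would need $K$-integrability of $|u|$ to the power $\left(\tfrac{q_2-1}{2}\right)\tfrac{q_2}{q_2-2}>\tfrac{q_2}{2}$ on $\{|u|>1\}$, which the embedding does not provide, so the boundedness of $h\mapsto\int K\,g(f(u))f'(u)h\,dx$ and hence the $C^1$ argument do not close under $(\mathbf{h_2})$. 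The missing ingredient is the inequality $|t|\,f'(t)\le |f(t)|$ (Lemma 2.1 of \cite{BGR_quasi}), which is precisely how the paper argues: $|g(f(t))f'(t)|\le C|f(t)|^{q_i-1}|t|^{-1}|t f'(t)|\le C|f(t)|^{q_i}|t|^{-1}\le C|t|^{q_i/2-1}$; with this corrected bound your uniform-integrability argument (or the citation of \cite{BPR}) goes through with the exponents $q_1/2,q_2/2$ that motivate the factor $2$ in $(\mathbf{h_2})$.

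Two smaller but real slips concern $I_2$. First, you invoke Lemma \ref{lem:properties} in the wrong direction: it deduces $\|u_n-u\|_o\to 0$ from a.e.\ convergence together with convergence of the modulars $\int V f^2(u_n)\,dx\to\int V f^2(u)\,dx$, while for continuity of $I_2$ you need the converse implication (norm convergence implies modular convergence), which holds because $f^2$ satisfies a $\Delta_2$-type condition, not by that lemma. Second, the bound $|f(t)f'(t)|\le\min\{|f(t)|,1/\sqrt{2}\}$ alone does not furnish a $V(|x|)\,dx$-integrable dominating function for the difference quotients, since $\int V|h|\,dx$ may be infinite for $h\in E$ (recall that $V$ may grow exponentially at infinity, as in Example \ref{ex5}); one needs an inequality such as $|f(a)f'(a)\,b|\le C\bigl(f^2(a)+f^2(b)\bigr)$, coming from the concavity/subadditivity properties of $f$, in order to dominate by $V\bigl(f^2(u)+f^2(h)\bigr)\in L^1$. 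The paper sidesteps these details by referring to \cite{Uberlandio2} and \cite{BGR_quasi}, but your sketch as written does not yet supply them.
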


\begin{proof}
	Define
	$$ I_1 (u)  =    \frac{1}{2} \int_{\mathbb{R}^{N}} |\nabla u |^2 dx, \qquad I_2 (u)  = \frac{1}{2} \int_{\mathbb{R}^{N}}V(|x|) f^2 (u)  dx , $$
	$$I_3 (u) = 
	\int_{\mathbb{R}^{N}}K(|x|) G(f (u))  \,dx ,$$
	where $G(t)= \int_{0}^{t} g(s)ds $. We study these three functionals separately.
	\par \noindent As to $I_1$, it is a standard task to get that $I_1$ is $C^1$ on $E$ with differential given by $I'_1 (u)h= \int_{\mathbb{R}^{N}}\nabla u \nabla h dx $. 
	\par \noindent As to $I_3$, we notice that, setting $h(x,t)= K(|x|) G(f(t))$, we have $h(x,t) = \int_{0}^{t} K(|x|) g(f (s)) f'(s) ds$. Exploiting the properties of $f$ (see Lemma 2.1 in \cite{BGR_quasi}), we have that
	$$
	|g(f(t))| \, |f'(t) | \leq C |f(t)|^{q_i -1} |t|^{-1} |t f'(t)| \leq C |f(t)|^{q_i } |t|^{-1}\leq C |t|^{q_i/2 -1 } .
	$$
	and therefore
	$$\left|  K(|x|) g(f (t)) f'(t) 	\right| \leq C  K(|x|)\min \left\{ \left| t\right|
	^{q_{1}/2-1},\left| t\right| ^{q_{2}/2-1}\right\} .$$
	Then we can apply the results of \cite{BPR} (in particular Proposition 3.8) and the fact that $E \hookrightarrow L_{K}^{q_{1}}(\mathbb{R}^{N})+L_{K}^{q_{2}}(\mathbb{R}^{N})$  (see Theorem \ref{THM(cpt)}), to get that also $I_3$ is $C^1$ on $E$, with differential given by 
	$$I'_3 (u)h = \int_{\mathbb{R}^{N}}K(|x|) g(f (u)) f'(u) h\,dx.$$
	\par \noindent As to $I_2$, we can repeat here the arguments of \cite{Uberlandio2} or \cite{BGR_quasi}, which also work under our hypotheses, to get that $I_2$ is well defined, continuous and Gateaux differentiable on $E$, with differential $I'_2$ given by
	$$  I'_2 (u) h = \int_{\mathbb{R}^{N}}V(|x|) f (u) f'(u) h  dx .$$
\end{proof}

We now give the main properties of the critical points of $I$, under the hypotheses of Theorem \ref{THM:critical}. In particular, we will point out the relation between equation (\ref{EQdual}) and the original equation (\ref{EQ}).  
Clearly a critical point $u$ of $I$ satisfies $I'(u)h=0$, i.e.
\begin{equation}
	\label{EQweakdual}
	\int_{\mathbb{R}^{N}}\nabla u \nabla h \, dx + \int_{\mathbb{R}^{N}}V(|x|) f (u) f'(u) h \,  dx -\int_{\mathbb{R}^{N}}K(|x|) g(f (u)) f'(u) h\,dx =0
\end{equation}
for all $h \in E$, which is, of course, a weak formulation of equation (\ref{EQdual}). The other relevant properties of the critical points of $I$ are summarized by the following theorem, which we proved in \cite{BGR_quasi}. 

\begin{thm} 
	Assume the hypotheses of Theorem \ref{THM:critical}. Let $u\in E$ be a critical point of $I$ and set $w= f(u)$. Then
	\begin{itemize}
		\item[(1)] $u \in  C^2 ( \mathbb{R}^{N} \backslash \{ 0 \} )$ and $u$ is a classical solution of equation (\ref{EQdual}) in $ \mathbb{R}^{N} \backslash \{ 0 \}$;
		
		\item[(2)] $w \in C^2 \left(\mathbb{R}^{N} \backslash \{ 0\} \right) $ and $w$ is a classical solution of equation (\ref{EQ}) in $\mathbb{R}^{N} \backslash \{ 0\}$;
		
		\item[(3)] $w \in X$ and $w$ satisfies the following weak formulation of (\ref{EQ}):
		\begin{equation*} 
			\int_{\mathbb{R}^{N}}  \left( 1+ 2w^2 \right) \nabla w\cdot \nabla h\, dx+ \int_{\mathbb{R}^{N}}   2w |\nabla w|^2 \, h \, dx + \int_{
				\mathbb{R}^{N}}V\left( \left| x\right| \right) wh\,dx=
		\end{equation*}
		\begin{equation} \label{EQweak2}
		=\int_{		\mathbb{R}^{N}}K(|x|) g(w)  h\,dx 
		\end{equation}
		for all $h \in C_{\mathrm{c}, r}^{\infty }( \mathbb{R}^{N} )$.
		
	\end{itemize}
	
\end{thm}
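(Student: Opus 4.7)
The plan is to prove the three parts in sequence, each building on the previous one.

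For part (1), I would combine the classical radial pointwise estimate $|u(x)| \leq C\|u\|_{1,2}|x|^{(2-N)/2}$, valid for radial $u \in D^{1,2}(\mathbb{R}^N)$ with $N \geq 3$, with standard interior elliptic regularity. The estimate makes $u$ locally bounded on $\mathbb{R}^N \setminus \{0\}$; since $V$, $K$ and $g$ are continuous and $f$ is smooth, the source $F(x):=K(|x|)g(f(u(x)))f'(u(x))-V(|x|)f(u(x))f'(u(x))$ in the equation $-\Delta u = F$ is locally bounded and continuous on $\mathbb{R}^N \setminus \{0\}$. Starting from (\ref{EQweakdual}), a Calder\'on--Zygmund then Schauder bootstrap on relatively compact subsets of $\mathbb{R}^N \setminus \{0\}$ upgrades $u$ to $C^2_{\mathrm{loc}}(\mathbb{R}^N \setminus \{0\})$, so that (\ref{EQdual}) holds pointwise there.

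Part (2) is essentially a chain-rule calculation. Smoothness of $f$ gives $w=f(u)\in C^2(\mathbb{R}^N\setminus\{0\})$. Differentiating the defining relation $(f'(t))^2=1/(1+2f(t)^2)$ yields $f''(t)=-2f(t)(f'(t))^4$, equivalently $(1+2w^2)f''(u)=-2w(f'(u))^2$. Using $\nabla w=f'(u)\nabla u$ and $\Delta w=f''(u)|\nabla u|^2+f'(u)\Delta u$ together with $(1+2w^2)f'(u)=1/f'(u)$, one checks that dividing (\ref{EQdual}) by $f'(u)$ rewrites it exactly as $-(1+2w^2)\Delta w-2w|\nabla w|^2+Vw=Kg(w)$, which is the classical form of (\ref{EQ}) once one expands $w\Delta(w^2)=2w^2\Delta w+2w|\nabla w|^2$.

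For part (3), I would first verify $w\in X$ using the change-of-variables bounds $\int V w^2=\int Vf^2(u)<\infty$, $\int w^2|\nabla w|^2\leq\tfrac{1}{2}\int|\nabla u|^2<\infty$ (the latter from $f(t)^2(f'(t))^2\leq 1/2$), and $|\nabla w|\leq|\nabla u|$. Then, given $h\in C_{\mathrm{c},r}^\infty(\mathbb{R}^N)$, I would plug the substitution $\eta:=\sqrt{1+2w^2}\,h=h/f'(u)$ into (\ref{EQweakdual}); a chain-rule computation gives
\[
\nabla\eta=\sqrt{1+2w^2}\,\nabla h+2wh\,f'(u)\nabla w,\qquad \nabla u=\sqrt{1+2w^2}\,\nabla w,
\]
and the algebraic identity $f'(u)\sqrt{1+2w^2}=1$ collapses (\ref{EQweakdual}) evaluated at $\eta$ to precisely (\ref{EQweak2}). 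The main obstacle is verifying $\eta\in E$ when $h(0)\neq 0$: I would exploit the growth $|f(t)|\leq C|t|^{1/2}$ at infinity together with the radial estimate $|u(x)|\leq C|x|^{(2-N)/2}$ to bound $w^2\lesssim|x|^{(2-N)/2}$ near the origin, which combined with $|\nabla w|\leq|\nabla u|$ gives $\nabla\eta\in L^2$; the resulting at-most-$|x|^{(2-N)/4}$ pointwise growth of $|\eta|$ then pairs with $V(r)\leq Cr^{-2}$ from $(\mathbf{H})$ to make $\int Vf^2(\eta)$ finite for $N\geq 3$ via $f^2(\eta)\lesssim|\eta|$.
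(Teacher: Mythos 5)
First, note that the paper you were given does not actually prove this theorem: it is quoted with the remark that the proof is in \cite{BGR_quasi}, so the comparison below is with the standard argument that the statement requires. Your parts (2) and (3) are essentially correct and are the expected route: the identity $f''(t)=-2f(t)(f'(t))^4$, division of (\ref{EQdual}) by $f'(u)$, and the substitution $\eta=h/f'(u)=\sqrt{1+2w^2}\,h$ in (\ref{EQweakdual}) all check out, and your treatment of the delicate case $h(0)\neq 0$ (radial lemma $|u(x)|\leq C\|u\|_{1,2}|x|^{(2-N)/2}$, $|f(t)|\leq C|t|^{1/2}$, $f^2(\eta)\lesssim|\eta|$, and $V(r)\leq Cr^{-2}$ from $(\mathbf{H})$ to get $\eta\in E$) is exactly the right device and the integrability exponents work for $N\geq 3$.

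The genuine gap is in part (1), in the final regularity step. Schauder interior estimates upgrade $C^{1,\alpha}$ to $C^{2}$ only when the right-hand side $F=K(|x|)g(f(u))f'(u)-V(|x|)f(u)f'(u)$ is locally H\"older continuous; but the hypotheses $(\mathbf{H})$ and $(\mathbf{g_1})$ only give $V$, $K$, $g$ \emph{continuous}, so $F$ is merely continuous, and $-\Delta u=F$ with continuous $F$ does not in general produce a $C^2$ solution. As stated, your bootstrap therefore cannot reach the claimed $u\in C^{2}(\mathbb{R}^N\setminus\{0\})$. The conclusion is still true, but one must exploit radial symmetry: after the Calder\'on--Zygmund step (or even directly from the weak formulation tested on radial functions supported in annuli, which also sidesteps the secondary point that (\ref{EQweakdual}) is only known for \emph{radial} $h\in E$, so that interior elliptic regularity against arbitrary test functions needs either a spherical-averaging argument or symmetric criticality), one writes the equation as the ODE $-(r^{N-1}u')'=r^{N-1}F(r)$ in the weak sense on $(0,+\infty)$; since $F$ is continuous there, $r^{N-1}u'$ is $C^1$ with derivative $-r^{N-1}F(r)$, hence $u\in C^2((0,+\infty))$ as a function of $r$ and thus $u\in C^2(\mathbb{R}^N\setminus\{0\})$. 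With this replacement for the Schauder step, the rest of your argument (parts (2) and (3)) goes through.
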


%
%
%
%
%
%
%
%
%
%
%
%
%
%
%
%
%
%
%
%

%

\section{Existence of solutions \label{SEC: ex}}

Our main existence result is the following.

\begin{thm}
	\label{THM:ex} Assume the hypotheses of Theorem \ref{THM:critical}. Then the functional $I:E\rightarrow \mathbb{R}$ has a nonnegative
	critical point $u\neq 0$.
\end{thm}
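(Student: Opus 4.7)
The plan is to apply the Mountain Pass Theorem of Ambrosetti--Rabinowitz to $I$, transposing the strategy of \cite{BGR_quasi} now that Lemma \ref{embedmezzi} provides the compact embedding $E\hookrightarrow L_K^{q_1/2}+L_K^{q_2/2}$ without the auxiliary hypothesis that $K$ vanish at the origin. To produce a \emph{nonnegative} critical point, I would first truncate the nonlinearity, setting $\tilde g(t):=g(t)$ for $t\geq 0$ and $\tilde g(t):=0$ for $t\leq 0$; the hypotheses $({\bf g_1})$--$({\bf g_{q_1,q_2}})$ all pass to $\tilde g$, and for any critical point $u$ of the modified functional $\tilde I$ testing (\ref{EQweakdual}) (with $\tilde g$ in place of $g$) against $u^-:=\min\{u,0\}\in E$ yields $||u^-||_{1,2}^2+\int V(|x|)f(u)f'(u)u^-\,dx=0$. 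Since $f$ is odd and $f'>0$, both summands are nonnegative, forcing $u\geq 0$ a.e.

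I would then check the Mountain Pass geometry. $\tilde I(0)=0$ is immediate; the estimate $|G(f(t))|\leq C\min\{|t|^{q_1/2},|t|^{q_2/2}\}$, obtained as in the proof of Theorem \ref{THM:critical}, together with the continuous embedding of Lemma \ref{embedmezzi}, gives $|I_3(u)|\leq C\bigl(||u||^{q_1/2}+||u||^{q_2/2}\bigr)$. Since $({\bf h_2})$ forces both exponents to be strictly greater than $2$, while $I_1(u)+I_2(u)\geq \tfrac12||u||_{1,2}^2$ dominates for $||u||$ small, one gets $\tilde I(u)\geq\alpha>0$ on a sphere $\{||u||=\rho\}$. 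For the other endpoint, integrating $({\bf g_2})$ with $({\bf g_3})$ yields $G(s)\geq c\,s^{2\theta}$ for $s$ large; fixing a nonnegative $u_0\in C_{c,r}^\infty(\mathbb R^N)\setminus\{0\}$ supported where $K>0$ and using $f^2(s)\sim\sqrt 2\,s$ as $s\to+\infty$, one obtains $\int K\,G(f(tu_0))\,dx\gtrsim t^\theta$ while $I_1(tu_0)+I_2(tu_0)=O(t^2+t)$. Since $\theta>2$, $\tilde I(tu_0)\to-\infty$.

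The main obstacle is the Palais--Smale (or Cerami) condition. For a (PS) sequence $(u_n)$ I would pair $\tilde I'(u_n)$ with the auxiliary test function $\phi(u):=f(u)/f'(u)=f(u)\sqrt{1+2f^2(u)}$, which belongs to $E$ whenever $u\in E$ and satisfies $|\nabla\phi(u)|\leq C|\nabla u|$ (both shown in \cite{BGR_quasi}); the combination $\tilde I(u_n)-\tfrac{1}{2\theta}\tilde I'(u_n)\phi(u_n)$, together with $({\bf g_2})$ applied to $\int K\tilde g(f(u_n))f(u_n)\,dx$, leads to an estimate of the form $c_1||u_n||_{1,2}^2+c_2\int V f^2(u_n)\,dx\leq C(1+||u_n||)$, hence $(u_n)$ is bounded in $E$. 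Extracting a weakly convergent subsequence $u_n\rightharpoonup u$, Lemma \ref{embedmezzi} combined with the pointwise bound on $|K\tilde g(f(u_n))f'(u_n)|$ from Theorem \ref{THM:critical} would give $\tilde I_3'(u_n)\to\tilde I_3'(u)$ in $E'$, so that $u$ is a critical point of $\tilde I$. The delicate final step is upgrading to strong convergence in $E$: once $\int V f^2(u_n)\,dx\to\int V f^2(u)\,dx$ is established by a Brezis--Lieb-type argument as in \cite{BGR_quasi} (which does not use the vanishing of $K$ at $0$), Lemma \ref{lem:properties} delivers $||u_n-u||_o\to 0$, and $||u_n-u||_{1,2}\to 0$ follows from $\tilde I'(u_n)(u_n-u)\to 0$ together with the strong convergence in $L_K^{q_1/2}+L_K^{q_2/2}$.
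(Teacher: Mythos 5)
Your overall strategy is the same as the paper's: truncate $g$ on the negative axis to get nonnegativity (this is Remark \ref{RMK:thm:ex} plus testing with $u^-$), establish mountain-pass geometry, and prove the Palais--Smale condition via the compact embedding of Lemma \ref{embedmezzi} and Lemma \ref{lem:properties}. However, two steps have genuine gaps as written. First, the geometry: you work on a norm sphere $\{\|u\|=\rho\}$ and claim that $I_1(u)+I_2(u)\geq\tfrac12\|u\|_{1,2}^2$ ``dominates'' $C\bigl(\|u\|^{q_1/2}+\|u\|^{q_2/2}\bigr)$ for small $\rho$. It does not: on $\{\|u\|=\rho\}$ the quantity $\|u\|_{1,2}$ can be arbitrarily small compared with $\|u\|_o$, so a quadratic lower bound in $\|u\|_{1,2}$ alone controls nothing. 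To run the classical Ambrosetti--Rabinowitz version you would need a lower bound for the modular $\int V f^2(u)\,dx$ in terms of $\|u\|_o$ (such an inequality can be extracted from $f(kt)\leq kf(t)$ for $k\geq1$, but you neither state nor prove it). The paper avoids this issue altogether by using the Aubin--Ekeland form of the Mountain Pass Lemma (Theorem \ref{MPLemma}), in which $0$ and $v$ are separated by the level set $S_\rho=\{u\in E:\ J(u)=\rho\}$ of the convex functional $J=I_1+I_2$, not by a sphere; this is exactly the content of Lemma \ref{LE:MP0}, quoted from \cite{BGR_quasi}.

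Second, the end of your (PS) argument is incomplete. To get $\|u_n-u\|_{1,2}\to0$ you invoke $\tilde I'(u_n)(u_n-u)\to0$ together with strong convergence in $L_K^{q_1/2}+L_K^{q_2/2}$, but $\tilde I'(u_n)(u_n-u)$ also contains the term $\int V f(u_n)f'(u_n)(u_n-u)\,dx$, and the convergence $\|u_n-u\|_o\to0$ does not obviously make it vanish (one would need an Orlicz--H\"older estimate in the class complementary to $f^2$, which you do not supply; note $f^2$ grows only linearly at infinity, so $\int V(u_n-u)^2dx$ is not controlled). The paper's route is different and cleaner: from convexity of $J$, the fact that $I_3'(u_n)(u-u_n)\to0$ (compact embedding), weak lower semicontinuity of the $D^{1,2}$-norm and Fatou's lemma, one obtains along subsequences both $\int V f^2(u_n)\,dx\to\int V f^2(u)\,dx$ and $\int|\nabla u_n|^2dx\to\int|\nabla u|^2dx$; then Lemma \ref{lem:properties} gives $\|u_n-u\|_o\to0$, and norm convergence together with weak convergence in the Hilbert space $D_r^{1,2}$ gives $\|u_n-u\|_{1,2}\to0$. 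Your boundedness argument via the test function $f(u_n)/f'(u_n)$ and your nonnegativity argument are sound and correspond to what is done in \cite{BGR_quasi} (the paper simply cites Lemma \ref{LE:MP0}(3) and Remark \ref{RMK:thm:ex} for these points), so the fixes needed are precisely the two indicated above.
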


\begin{rem}
	\label{RMK:thm:ex} In Theorem \ref{THM:ex}, as we look for nonnegative solutions, we may assume $g(t)=0$ for all $t\leq 0$. Indeed, if we have a nonlinearity $g$ satisfying the hypotheses of the theorem, we can replace $g(t)$ with $\chi _{\mathbb{R}_{+}}(t) g(t) $ ($\chi _{\mathbb{R}_{+}}$ is the characteristic function of $\mathbb{R}_{+}$) and the new nonlinearity still satisfies the hypotheses.
\end{rem}


\begin{rem}
	As concerns examples of nonlinearities satisfying the hypotheses of Theorem \ref{THM:ex}, the simplest 
	$g\in C\left(\mathbb{R};\mathbb{R}\right) $ such that ${\bf  \left( g_{q_{1},q_{2}}\right) }$ holds is 
	\[g\left( t\right) =\min \left\{ \left| t\right| ^{q_{1}-2}t,\left| t\right|
	^{q_{2}-2}t\right\} ,
	\]
	which also ensures ${\bf  \left( g_{2}\right) }$ if $
	q_{1},q_{2}>4$ (with $\theta = \min \left\{ \frac{q_1}{2}, \frac{q_2}{2} \right\} $). Another model example is 
	\[
	g\left( t\right) =\frac{\left| t\right| ^{q_{2}-2}t}{1+\left| t\right|
		^{q_{2}-q_{1}}}\quad \text{with }1<q_{1}\leq q_{2},
	\]
	which ensures ${\bf \left( g_{2}\right) }$ if $q_{1}>4$ (with $\theta =\frac{q_1}{2}$). Note that, in
	both these cases, also ${\bf \left( g_{3}\right) }$ holds true. Moreover, both of these functions $g$
	become $g\left( t\right) =\left| t\right| ^{q-2}t$ if $q_{1}=q_{2}=q$. 
\end{rem}

In order to prove Theorem \ref{THM:ex}, we first recall the Palais-Smale condition and a version of the well-known Mountain-Pass Lemma (see \cite[Chapter 2]{AubinEkeland}). 

\begin{defin}
	Let $Y$ be a Banach space and $\Phi : Y \rightarrow \mathbb{R}$ a $C^1$ functional. We say that $\Phi $ satisfies the Palais-Smale condition if for any sequence $\{ x_n \}_n $ sucht that $\Phi(x_n )$ is bounded in $\mathbb{R}$ and $\Phi' (x_n ) \rightarrow 0$ in $Y'$, there exists a subsequence $\{ x_{n_k} \}_k $ converging in $Y$.
\end{defin}

\begin{thm} (Mountain Pass Lemma) \label{MPLemma}
	
	\noindent Let $Y$ be a Banach space and $\Phi : Y \rightarrow \mathbb{R}$ a $C^1$ functional such that $\Phi (0)=0$ . Assume that $\Phi$ satisfies the Palais-Smale condition and that there exist a subset $S \subseteq Y$ and $\alpha >0$  such that: 
	\begin{itemize}
		
		\item[(i)] $Y  \setminus S$ is not arcwise connected;
		
		\item[(ii)] $\Phi  (x) \geq \alpha$ for all $x \in S$;
		
		\item[(iii)] there exists $y \in Y  \backslash (C_0 \cup S )$ such that $\Phi (y)<0$, where $C_0$ is the connected component of $ Y  \backslash S$ such that $0 \in C_0$.
		
	\end{itemize}
	Then $\Phi $ has a critical point $u \in Y$ such that $\Phi (u) \geq \alpha $.

\end{thm}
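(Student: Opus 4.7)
The proof follows the classical Ambrosetti--Rabinowitz min-max scheme, with the topological separation condition (i) replacing the usual role of a norm sphere. First I would introduce the path space
\[
\Gamma := \{ \gamma \in C([0,1], Y) : \gamma(0) = 0, \ \gamma(1) = y \}
\]
and define the candidate critical level
\[
c := \inf_{\gamma \in \Gamma} \max_{t \in [0,1]} \Phi(\gamma(t)).
\]
Note that $\Gamma \neq \emptyset$ because $Y$ is a Banach space (e.g.\ the straight segment joining $0$ to $y$ belongs to $\Gamma$).

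The first step is a purely topological lemma: every $\gamma \in \Gamma$ must cross $S$. Indeed, the image $\gamma([0,1])$ is connected (continuous image of a connected set); if it missed $S$ entirely, it would be a connected subset of $Y \setminus S$ containing $0 \in C_{0}$, and hence contained in the connected component $C_{0}$. But by (iii) the endpoint $y$ lies outside $C_{0} \cup S$, a contradiction. Therefore there exists $t^{*} \in [0,1]$ with $\gamma(t^{*}) \in S$, and (ii) gives $\max_{t} \Phi(\gamma(t)) \geq \Phi(\gamma(t^{*})) \geq \alpha$. Taking the infimum over $\Gamma$ yields $c \geq \alpha > 0$.

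The second step is to show that $c$ is actually a critical value. This is the standard argument via the quantitative deformation lemma (as stated in Aubin--Ekeland, Chapter~2): assuming for contradiction that $\Phi$ has no critical point at level $c$, the Palais--Smale condition ensures that $\|\Phi'\|$ is bounded away from $0$ on a strip $\Phi^{-1}([c-2\varepsilon_{0}, c+2\varepsilon_{0}])$ for some $\varepsilon_{0} > 0$, and the deformation lemma produces an $\eta \in C(Y,Y)$ such that $\eta$ is the identity outside that strip and $\Phi(\eta(x)) \leq c - \varepsilon_{0}$ whenever $\Phi(x) \leq c + \varepsilon_{0}$. Choosing $\varepsilon_{0} < \alpha/2$ ensures $\Phi(0) = 0$ and $\Phi(y) < 0$ both lie below the strip, so $\eta$ fixes the endpoints $0$ and $y$. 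Picking $\gamma_{0} \in \Gamma$ with $\max_{t} \Phi(\gamma_{0}(t)) < c + \varepsilon_{0}$ and setting $\tilde\gamma := \eta \circ \gamma_{0}$, we obtain $\tilde\gamma \in \Gamma$ with $\max_{t} \Phi(\tilde\gamma(t)) \leq c - \varepsilon_{0}$, contradicting the definition of $c$.

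The main obstacle is the construction of the deformation $\eta$, which requires a pseudo-gradient vector field for $\Phi$ away from its critical set together with a controlled flow argument; this is exactly the point at which the Palais--Smale condition is used. Since a complete proof of the quantitative deformation lemma is lengthy and entirely standard, I would simply invoke it from Aubin--Ekeland (as the statement already does), thereby reducing the work to the two conceptual steps above: verifying $c \geq \alpha$ via the disconnection hypothesis, and deriving a contradiction from the absence of critical points at level $c$.
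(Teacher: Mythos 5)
Your argument is correct: the topological crossing step (any path from $0$ to $y$ has connected image, so if it avoided $S$ it would lie in $C_0$, contradicting $y\notin C_0\cup S$) gives $c\geq\alpha>0$, and the deformation-lemma contradiction with endpoints fixed below level $c-2\varepsilon_0$ is the standard conclusion. The paper itself offers no proof of this statement --- it is simply quoted from Aubin--Ekeland --- so there is nothing to compare against; your sketch is exactly the classical min-max argument that the cited reference carries out.
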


We will prove Theorem \ref{THM:ex} by showing that the functional $I: E\rightarrow \mathbb{R}$ satisfies the hypotheses of the Mountain Pass Lemma.  Obviously $I(0)=0$. The proof of the other hypotheses is the subject of Lemmas \ref{LE:MP0} and \ref{LE:MP4}. 
Recall the three functionals $I_1 , I_2 , I_3$ introduced in the proof of Lemma \ref{THM:critical}, and for $u \in E$ define $J (u) = I_1 (u)+ I_2 (u)= \frac{1}{2} \int_{\mathbb{R}^{N}}|\nabla u |^2 \, dx  + \frac{1}{2} \int_{\mathbb{R}^{N}}V(|x|) f^2 (u) \,dx $. Moreover, for any $\rho >0$ set 
$$
S_{\rho}= \left\{ u \in E \, | \, J(u) = \rho \right\}.
$$


\begin{lem}
	\label{LE:MP0}
	Assume the hypotheses of Theorem \ref{THM:ex}. Then:
	\begin{itemize}
		
		\item[(1)] for every $\rho >0$ the set $E\setminus S_{\rho}$ is not arcwise connected;
		
		\item[(2)] there exist $\rho , \alpha >0$ and $v\in E$ such that $J(v) > \rho$, $I(v)<0$ and $I(u) \geq \alpha$ for all $u \in S_{\rho}$;
		
		\item[(3)] if $\left\{ u_n   \right\}_n \subseteq E$ is a Palais-Smale sequence for $I$, i.e., a sequence such that 
		$ \left\{ I(u_n ) \right\}_n$ is bounded and $ I' (u_n ) \rightarrow 0 $ in $E'$, then $\left\{ u_n   \right\}_n $ in bounded in $ E$.
		
	\end{itemize}
	
\end{lem}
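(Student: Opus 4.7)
The plan is straightforward: $J$ is continuous on $E$ (since $I_1,I_2$ are continuous by Theorem \ref{THM:critical}), $J(0)=0$, and $J(tu)\ge t^2\|u\|_{1,2}^2/2\to+\infty$ as $t\to\infty$ for any $u\ne 0$ in $E$, so both open sets $\{J<\rho\}$ and $\{J>\rho\}$ are nonempty. The intermediate value theorem then forces every continuous path joining them to cross $S_\rho$, proving $E\setminus S_\rho$ is not arcwise connected.

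\textbf{Part (2).} From $({\bf g_{q_1,q_2}})$ and the known bounds on $f$ ($|f(t)|\le|t|$ globally, and $|f(t)|\lesssim|t|^{1/2}$ at infinity) I would first derive $|G(f(u))|\le C\min\{|u|^{q_1/2},|u|^{q_2/2}\}$. Splitting $\mathbb R^N=B_{R_1}\cup(B_{R_2}\setminus B_{R_1})\cup(\mathbb R^N\setminus B_{R_2})$ and using (i) $\lim_{R_1\to 0^+}\mathcal{S}_0(q_1/2,R_1)=0$ from Theorem \ref{THM0} (applicable because $q_1/2$ lies in the admissible range by $({\bf h_1}),({\bf h_2})$), (ii) $\lim_{R_2\to+\infty}\mathcal{S}_\infty(q_2/2,R_2)=0$ from Theorem \ref{THM1}, and (iii) Sobolev on the compact annulus (where $K$ is bounded) together with $\|u\|_{1,2}^2\le 2J(u)$, I would obtain an estimate of the form $I_3(u)\le C_1 J(u)^{m_1}+C_2 J(u)^{m_2}$ with $m_1,m_2>1$. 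Hence $I(u)=\rho-I_3(u)\ge\rho/2=:\alpha$ on $S_\rho$ once $\rho$ is small enough. For the far point, I would fix nonzero nonnegative $u_0\in C_{\mathrm c,r}^\infty(\mathbb R^N)$ and set $v_t:=tu_0$: by $({\bf g_2})$--$({\bf g_3})$, $G(s)\gtrsim s^{2\theta}$ for large $s$; using $f^2(tu_0)\sim\sqrt 2\,t|u_0|$ as $t\to\infty$, this yields $\int K G(f(tu_0))\gtrsim t^\theta$, which dominates $J(tu_0)\lesssim t^2$ since $\theta>2$, so $I(v_t)\to-\infty$ while $J(v_t)\to+\infty$. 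The main obstacle is step (iii): turning the piecewise integral bound into a power of $J$ strictly above $1$ requires distributing the terms carefully between the bounds $\|u\|_{1,2}^2\le 2J(u)$ and $\int Vf^2(u)\le 2J(u)$, since the Orlicz part of $\|u\|$ itself does not shrink with $J(u)$.

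\textbf{Part (3).} For a Palais--Smale sequence $\{u_n\}$, I would test $I'(u_n)$ against $h_n:=\phi(u_n)$, with $\phi(t):=f(t)/f'(t)=f(t)\sqrt{1+2f^2(t)}$; the point is that $\phi$ smooths out the $f'(u_n)$ factor in the nonlinear term. A direct computation gives $\phi'(t)=(1+4f^2(t))/(1+2f^2(t))\in[1,2]$ and
\[
I'(u_n)h_n=\int\phi'(u_n)|\nabla u_n|^2\,dx+\int V f^2(u_n)\,dx-\int K g(f(u_n))f(u_n)\,dx.
\]
Combining with $I(u_n)$ via $({\bf g_2})$ and $\theta>2\ge\phi'(u_n)$:
\[
2\theta I(u_n)-I'(u_n)h_n\ge(\theta-2)\|u_n\|_{1,2}^2+(\theta-1)\int V f^2(u_n)\,dx\ge c\,J(u_n)
\]
for some $c>0$. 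Since $\|h_n\|_{1,2}\le 2\|u_n\|_{1,2}\le C\sqrt{J(u_n)}$ and $\|h_n\|_o\le 1+\int V f^2(\phi(u_n))\,dx\le 1+C\int V f^2(u_n)\,dx\le C(1+J(u_n))$ — using the inequality $f^2(\phi(t))\le Cf^2(t)$, which one verifies by examining the asymptotics of $\phi$ and $f$ at $0$ and at $\infty$ — one has $\|h_n\|\le C(1+J(u_n))$. Because $I'(u_n)\to 0$ in $E'$ and $\{I(u_n)\}$ is bounded, this yields $cJ(u_n)\le C'+o(1)(1+J(u_n))$, so $J(u_n)$ is bounded; then $\|u_n\|_{1,2}^2\le 2J(u_n)$ and $\|u_n\|_o\le 1+2J(u_n)$ give boundedness of $\{u_n\}$ in $E$.
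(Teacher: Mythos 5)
The paper gives no proof of this lemma beyond the citation of \cite{BGR_quasi}, so your argument can only be judged on its own terms. Parts (1) and (3) are sound. For (1), the intermediate-value argument on the continuous, coercive-along-rays functional $J$ is fine. For (3), testing with $h_n=\phi(u_n)$, $\phi=f/f'$, is the standard device in this literature and your computation is correct: $\phi'=(1+4f^2)/(1+2f^2)\in[1,2]$, hence $2\theta I(u_n)-I'(u_n)h_n\geq(\theta-2)\|u_n\|_{1,2}^2+(\theta-1)\int V f^2(u_n)\,dx$ by ${\bf(g_2)}$, while $|\phi(t)|\leq 2|t|$ and $f^2(\phi(t))\leq C f^2(t)$ give $h_n\in E$ with $\|h_n\|\leq C(1+J(u_n))$, so $J(u_n)$ is bounded and then $\|u_n\|_o\leq 1+\int Vf^2(u_n)\,dx$ bounds the $E$-norm. (You should still say a word on why $\phi(u_n)\in D^{1,2}_r$, i.e.\ the chain rule for $C^1$ maps with bounded derivative vanishing at $0$.)

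The genuine gap is in part (2), and it is exactly the step you flag and leave unresolved: the superlinear bound $I_3(u)\leq C_1J(u)^{m_1}+C_2J(u)^{m_2}$, $m_i>1$, which is the whole content of the mountain-pass geometry near the origin. Moreover, your stated reason for the difficulty --- that ``the Orlicz part of $\|u\|$ does not shrink with $J(u)$'' --- is false, and seeing why closes the gap. Since $f$ is concave on $[0,+\infty)$ with $f(0)=0$, one has $|f(kt)|\leq k|f(t)|$ for $k\geq1$; choosing $k=\left(\int V f^2(u)\,dx\right)^{-1/2}$ in the definition of $\|\cdot\|_o$ gives $\|u\|_o\leq 2\left(\int V f^2(u)\,dx\right)^{1/2}$ whenever $\int Vf^2(u)\,dx\leq1$, hence $\|u\|\leq C\sqrt{J(u)}$ on $S_\rho$ for $\rho$ small. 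Plugging this into the bounds via $\mathcal{S}_0(q_1/2,R_1)$ and $\mathcal{S}_\infty(q_2/2,R_2)$ produces the powers $J(u)^{q_1/4}$ and $J(u)^{q_2/4}$ with $q_i/4>1$ by ${\bf(h_2)}$. One further caveat: on the intermediate annulus you invoke ``Sobolev,'' but $q_1/2$ and $q_2/2$ may exceed $2N/(N-2)$ (note $q_0^*$ can be larger than $2^*$ and $q_2$ is unbounded above), so you should instead use the radial pointwise estimate $|u(x)|\leq C|x|^{-(N-2)/2}\|u\|_{1,2}$, which works for every exponent and again yields a power of $J(u)$ larger than $1$. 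With these two points supplied, your splitting gives $I\geq\rho/2$ on $S_\rho$ for $\rho$ small, and your construction of $v=tu_0$ (using $G(t)\geq G(t_0)(t/t_0)^{2\theta}$, $f^2(tu_0)\sim\sqrt{2}\,t u_0$, and $\int V u_0^2\,dx<\infty$ by $(\mathbf{H})$ and Hardy) is correct, so the overall scheme is salvageable but, as written, part (2) is not proved.
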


\proof
See \cite{BGR_quasi}.
\endproof

%
%
%
%
%
%
%
%
%
%
%
%
%
%
%
%
%


The following lemma shows that $I$ satisfies the Palais-Smale condition. This relies on the compact embedding of $E$ into $L_{K}^{q_{1}/2}(\mathbb{R}^{N})+L_{K}^{q_{2}/2}(\mathbb{R}^{N})$, which is one of the main devices in our proof of Theorem \ref{THM:ex} and holds true by Lemma \ref{embedmezzi}.

\begin{lem}
	\label{LE:MP4}
	\label{LEM:PS}Under the assumptions of Theorem \ref{THM:ex}, the functional $
	I:E\rightarrow \mathbb{R}$ satisfies the Palais-Smale condition.
\end{lem}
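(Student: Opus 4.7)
The plan is to follow the standard Palais--Smale scheme, with the compact embedding of Lemma \ref{embedmezzi} handling the nonlinear $K$-term and the strict convexity of $f^2$ (inherited from the ODE \eqref{eq:change}) handling the Orlicz contribution.

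Let $\{u_n\}\subset E$ be a Palais--Smale sequence. By Lemma \ref{LE:MP0}(3), $\{u_n\}$ is bounded in $E$, hence in $D_r^{1,2}$. Up to a subsequence, $u_n \rightharpoonup u$ weakly in $D_r^{1,2}$, $u_n\to u$ a.e.\ in $\mathbb{R}^N$, and, by Lemma \ref{embedmezzi}, $u_n\to u$ strongly in $L_K^{q_1/2}+L_K^{q_2/2}$. Combining the pointwise bound $|g(f(t))f'(t)|\leq C\min\{|t|^{q_1/2-1},|t|^{q_2/2-1}\}$ established in the proof of Theorem \ref{THM:critical} with the Nemytskii-continuity machinery in sum spaces of \cite{BPR}, one obtains, for every $\varphi\in E$,
$$\int_{\mathbb{R}^N} K(|x|)\,g(f(u_n))f'(u_n)\,\varphi\,dx \longrightarrow \int_{\mathbb{R}^N} K(|x|)\,g(f(u))f'(u)\,\varphi\,dx,$$
and the analogous statement with $\varphi=u_n$. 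Passing to the limit in $I'(u_n)\varphi=o(1)$ then shows that $u$ is a critical point of $I$; in particular $I'(u)u=0$.

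Since $\|u_n\|$ is bounded and $I'(u_n)\to 0$ in $E'$, both $I'(u_n)u_n\to 0$ and $I'(u_n)u\to 0$. Expanding the latter and using the weak convergence $\int\nabla u_n\cdot\nabla u\,dx \to \|u\|_{1,2}^2$ together with the convergence above yields $\int V(|x|)f(u_n)f'(u_n)u\,dx \to \int V(|x|)f(u)f'(u)u\,dx$. Expanding $I'(u_n)u_n\to 0$ and using $I'(u)u=0$ gives
\begin{equation*}
\|u_n\|_{1,2}^2 + \int_{\mathbb{R}^N}\! V(|x|)f(u_n)f'(u_n)u_n\,dx \longrightarrow \|u\|_{1,2}^2 + \int_{\mathbb{R}^N}\! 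V(|x|)f(u)f'(u)u\,dx.
\end{equation*}
Since $\liminf\|u_n\|_{1,2}^2\geq\|u\|_{1,2}^2$ (weak lower semicontinuity in $D_r^{1,2}$) and $\liminf\int V(|x|)f(u_n)f'(u_n)u_n\,dx \geq \int V(|x|)f(u)f'(u)u\,dx$ (Fatou on the nonnegative integrand, via a.e.\ convergence), both summands on the left must converge separately to their respective limits. In particular $\|u_n\|_{1,2}\to\|u\|_{1,2}$, which, combined with weak convergence in the Hilbert space $D_r^{1,2}$, upgrades to strong convergence $u_n\to u$ in $D_r^{1,2}$.

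It remains to show $\|u_n-u\|_o\to 0$, which by Lemma \ref{lem:properties} reduces to proving $\int V(|x|)f^2(u_n)\,dx \to \int V(|x|)f^2(u)\,dx$. A direct computation from \eqref{eq:change} gives $(f^2)''(t)=2/(1+2f^2(t))^2>0$, so $f^2$ is strictly convex, whence the pointwise inequality $f^2(u_n)-f^2(u)\leq 2f(u_n)f'(u_n)(u_n-u)$. Subtracting the two convergences of the previous paragraph yields $\int V(|x|)f(u_n)f'(u_n)(u_n-u)\,dx\to 0$, so $\limsup\int V(|x|)f^2(u_n)\,dx \leq \int V(|x|)f^2(u)\,dx$; combined with the reverse inequality supplied by Fatou, this gives the required convergence. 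The main obstacle lies precisely here: the Orlicz norm on $E$ is not an $L^p$-quantity, so strong $D_r^{1,2}$-convergence does not automatically imply convergence of $\int V f^2(u_n)$, and the strict convexity of $f^2$ (a feature of the specific change of variable $w=f(u)$) is what allows the "tested" convergence $\int V f(u_n)f'(u_n)(u_n-u)\to 0$ to pin down the untested integral $\int V f^2(u_n)$.
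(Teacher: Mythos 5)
Your overall strategy is genuinely different from the paper's, and it contains one load-bearing step that is not justified. You first claim that one can pass to the limit in $I'(u_n)\varphi=o(1)$ for every fixed $\varphi\in E$, so that the weak limit $u$ is a critical point and in particular $I'(u)u=0$; everything that follows (the identification of the limit of $\|u_n\|_{1,2}^2+\int V f(u_n)f'(u_n)u_n\,dx$, the splitting into two separately convergent summands, and the convergence $\int V f(u_n)f'(u_n)(u_n-u)\,dx\to 0$) rests on this. But the only term you actually justify is the $K$-term: the Nemytskii machinery of \cite{BPR} in the sum space, combined with Lemma \ref{embedmezzi}, covers $\int K g(f(u_n))f'(u_n)\varphi\,dx$, not the potential term $\int V f(u_n)f'(u_n)\varphi\,dx$. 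For the latter there is no obvious dominating function in this setting: the best pointwise bounds are $|f(u_n)f'(u_n)|\le\min\{|u_n|,1/\sqrt{2}\}$, which give the majorant $V|\varphi|$ (or $V|u_n||\varphi|$), and neither is controlled by membership in $E$, which only yields $\int V f^2(\varphi)\,dx<+\infty$ (note that $f^2(\varphi)\sim\varphi^2$ where $\varphi$ is small, and $V$ is allowed to be unbounded at infinity, e.g. $V(r)=e^{r}$ in Example \ref{ex5}). So "passing to the limit" here is precisely the kind of step this class of potentials makes delicate; it can be repaired (e.g. via Vitali's theorem, using an inequality of the type $|f(t)f'(t)s|\le\varepsilon f^2(t)+C_\varepsilon f^2(s)$, which follows from $f'\le 1$, $|ff'|$ bounded and the quadratic/linear lower bounds for $f^2$, together with the boundedness of $\int V f^2(u_n)\,dx$), but as written this is a genuine gap, not a routine citation.

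For comparison, the paper avoids the issue entirely: it never needs $u$ to be critical inside the Palais--Smale argument. It uses the convexity of $J=I_1+I_2$ (again via convexity of $f^2$) to write $J(u)-J(u_n)\ge J'(u_n)(u-u_n)=I'(u_n)(u-u_n)+I_3'(u_n)(u-u_n)$, where the first term vanishes because $I'(u_n)\to 0$ and $\{u-u_n\}$ is bounded, and the second vanishes by the compact embedding of Lemma \ref{embedmezzi}; then weak lower semicontinuity of $\|\cdot\|_{1,2}$ and Fatou force $\int V f^2(u_n)\,dx\to\int V f^2(u)\,dx$ and $\|u_n\|_{1,2}\to\|u\|_{1,2}$ along a subsequence, after which Lemma \ref{lem:properties} and the Hilbert structure of $D_r^{1,2}$ give strong convergence. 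Your final paragraph (convexity of $f^2$, $\limsup$ via the tested quantity, Fatou for the reverse inequality, then Lemma \ref{lem:properties}) is in the same spirit, but the paper applies convexity directly at the level of $J$ tested against $I'(u_n)$, which is exactly what removes the need for the unproven weak continuity of the $V$-part of $I'$.
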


\proof

Let $\left\{ u_{n}\right\}_n $ be a sequence in $E$ such that $\left\{
I\left( u_{n}\right) \right\}_n $ is bounded and $I^{\prime }\left(
u_{n}\right) \rightarrow 0$ in $E^{\prime }$. From the previous lemma we get that $\left\{ u_{n}\right\}_n $ is bounded in $E$, so that there is a subsequence, which we still call $\left\{ u_{n}\right\}_n $, such that $u_n \rightharpoonup u$ in $E$ and in $D_r^{1,2} \left(\mathbb{R}^{N}\right) $, and $u_n (x) \rightarrow u(x)$ a.e.$x$. Recall 
that we have defined $J= I_1 + I_2$, so that 
$I= J-I_3$. We know that $I_3$ is $C^1$ as a functional from $L_{K}^{q_{1}/2}+L_{K}^{q_{2}/2}$ to $\mathbb{R}$. By compactness of the embedding of $E$ into $L_{K}^{q_{1}/2}+L_{K}^{q_{2}/2}$, we have that $u_n \rightarrow u$ in $L_{K}^{q_{1}/2}+L_{K}^{q_{2}/2}$. Hence $I_{3}' (u_n ) \rightarrow I_{3}'(u)$ in the dual space of $L_{K}^{q_{1}/2}+L_{K}^{q_{2}/2}$ and $I_{3}' (u_n )(u-u_n )\rightarrow 0$ in $\mathbb{R}$.
We now notice that, as $f^2$ is a convex function (see Lemma 2.1 in \cite{BGR_quasi}), $J$ is a convex functional on $E$. Hence
$$
J(u)-J(u_n ) \geq J'( u_n ) (u- u_n ) = I'( u_n ) (u- u_n ) + I_{3}'( u_n ) (u- u_n ).
$$
Since $I'( u_n ) \rightarrow 0$ in $E'$ and $\left\{ u-u_{n}\right\} $ is bounded in $E$, we have that $I'( u_n ) (u- u_n )\rightarrow 0$. So 
$$
J(u) \geq J(u_n ) + o(1).
$$
Taking the $\liminf_n$, this gives
\begin{equation}
	\label{EQPS1}
	\int_{\mathbb{R}^{N}}| \nabla u |^2 dx + \int_{\mathbb{R}^{N}}V(|x|) f^2 (u ) dx 
\end{equation}
$$
\geq \liminf_n \left( \int_{\mathbb{R}^{N}}| \nabla u_n |^2 dx + \int_{\mathbb{R}^{N}}V(|x|) f^2 (u_n ) dx \right) 
$$
$$\geq \liminf_n  \int_{\mathbb{R}^{N}}| \nabla u_n |^2 dx + \liminf_n \int_{\mathbb{R}^{N}}V(|x|) f^2 (u_n ) dx .
$$
By semicontinuity of the norm, one has
$$
\liminf_n \int_{\mathbb{R}^{N}}| \nabla u_n |^2 dx \geq \int_{\mathbb{R}^{N}}| \nabla u |^2 dx,
$$
and hence (\ref{EQPS1}) gives
$$
\int_{\mathbb{R}^{N}}V(|x|) f^2 (u ) dx \geq \liminf_n  \int_{\mathbb{R}^{N}}V(|x|) f^2 (u_n ) dx.
$$
Fatou's Lemma obviously implies 
$$\int_{\mathbb{R}^{N}}V(|x|) f^2 (u ) dx \leq \liminf_n  \int_{\mathbb{R}^{N}}V(|x|) f^2 (u_n ) dx,$$
whence
\begin{equation}
	\label{EQPS2}
	\int_{\mathbb{R}^{N}}V(|x|) f^2 (u ) dx = \liminf_n  \int_{\mathbb{R}^{N}}V(|x|) f^2 (u_n ) dx.
\end{equation}
Passing to a subsequence, which we still label $\left\{ u_{n}\right\}_n $, we can assume
\begin{equation}
	\label{EQPS3}
	\int_{\mathbb{R}^{N}}V(|x|) f^2 (u ) dx = \lim_n \int_{\mathbb{R}^{N}}V(|x|) f^2 (u_n ) dx.
\end{equation}
By Lemma \ref{lem:properties}, we infer that $||u- u_n ||_o \rightarrow 0$. Repeating the previous arguments for this subsequence, we get again (\ref{EQPS1}). which now gives
$$
\int_{\mathbb{R}^{N}}| \nabla u |^2 dx \geq \liminf_n \int_{\mathbb{R}^{N}}| \nabla u_n |^2 dx 
$$
and hence 
$$
\int_{\mathbb{R}^{N}}| \nabla u |^2 dx = \liminf_n \int_{\mathbb{R}^{N}}| \nabla u_n |^2 dx . 
$$
Passing again to a subsequence if necessary, we may assume
$$
\int_{\mathbb{R}^{N}}| \nabla u |^2 dx = \lim_n \int_{\mathbb{R}^{N}}| \nabla u_n |^2 dx . 
$$
\noindent As $u_n \rightharpoonup u$ in $D_r^{1,2} \left(\mathbb{R}^{N}\right) $, we conclude that $u_n \rightarrow u$ in $D_r^{1,2} \left(\mathbb{R}^{N}\right) $, that is 
$||u- u_n ||_{1,2} \rightarrow 0$, and therefore $||u - u_n || = ||u- u_n ||_o + ||u- u_n ||_{1,2}\rightarrow 0$. 
\endproof

We can now easily conclude the proof of Theorem \ref{THM:ex}.

\proof[Proof of Theorem \ref{THM:ex}.] 
Taking $\rho$ and $v$ as in $(2)$ of Lemma \ref{LE:MP0}, we have $0=J(0) <\rho<J(v)$, so that  $v$ and $0$ are in two distinct 
connected components of $E1setminus S_{\rho}$. Together with Lemma \ref{LE:MP0}, this shows that all the hypoteses of the Mountain Pass Lemma \ref{MPLemma} are satisfied, so that we get a critical point $u \not= 0$ of $I$. As $g(t)=0$ for $t<0$ (cf. Remark \ref{RMK:thm:ex}), it is a standard task to get that the negative part $u^{-}$ of $u$ is zero (see for example the proof of Theorem 7.1 in \cite{BGR_quasi}), that is, $u$ is nonnegative.
\endproof

\section{Examples}\label{SEC:EX}

\noindent In this section we give some examples of applications of our existence results. In all the analogous applications (except one) contained in our previous paper \cite{BGR_quasi}, we had to assume that $K$ vanishes as $r \rightarrow 0$. Therefore, here we limit ourselves to give examples in which this does not happen. We also point out that all our examples do not satisfy the hypotheses of the results of \cite{Uberlandio2}, which is the other main reference about the problem under consideration. 

As usual, we let $N\geq 3$ and assume hypothesis ${\bf (H)}$. Just for simplicity, in all the examples we shall consider the model nonlinearity $g(t)= \min \{ t^{q_1 -1}, t^{q_2 -1} \}$. 

\begin{exa} \label{ex1}
	This first example is a very simple one: set $K(r)= 1 $ and $V(r)= 1/r^2$ for all $r>0$. Computing the coefficients of \cite{Uberlandio2}, we find $a_0 = -2$ and $b_0 = 0$, so that the results in \cite{Uberlandio2} cannot be applied, because they need $a_0 \geq b_0$ (see \cite[Theorem 1.1]{Uberlandio2}). Instead, if we set $\alpha_0 = \alpha_{\infty}= \beta_0 = \beta_{\infty}=0$, we get
	$$
	q_{0}^* (\alpha_0 , \beta_0 ) = q_{\infty}^* (\alpha_{\infty},\beta_{\infty}) = \frac{2N}{N-2}
	$$
	and thus we get existence results for nonlinearities $g(t)= \min \{ t^{q_1 -1} , t^{q_2 -1} \}$ with $ 4 < q_1 < \frac{2N}{N-2}< q_2$.
\end{exa}

\begin{exa} \label{ex2}
	Assume that 
	$$K(r) = \frac{1}{r^{\gamma}} \quad \quad \mbox{for} \,\, r\in (0,1), \quad \quad 
	K(r) = \frac{1}{r^{\delta}} \quad \quad \mbox{for} \,\, r>1, $$
	where $0< \delta < \gamma <2$, and suppose that $V$ just satisfies $\bf (H)$, without any further assumption on its asymptotic behavior as $r \rightarrow +\infty$.
	
	Computing the coefficients $b, b_0$ in \cite{Uberlandio2}, one gets $b \geq -\delta$ and $b_0=-\gamma$, whence $b >b_0$. As a consequence, the results of \cite{Uberlandio2} cannot be applied, because they need $b_0\geq b$. If we set $\beta_0 = \beta_{\infty}=0$, $\alpha_0 =-\gamma$, $\alpha_{\infty}=- \delta$, we get 
	$$ 
	q_{0}^* (\alpha_0 , \beta_0 ) = 2\, \frac{N-\gamma}{N-2}, \quad q_{\infty}^* (\alpha_{\infty},\beta_{\infty}) =2\, \frac{N-\delta}{N-2} ,
	$$
	so that, by the hypotheses on $\delta$ and $\gamma$,
	$$ 
	2< q_{0}^* (\alpha_0 , \beta_0 ) <q_{\infty}^* (\alpha_{\infty},\beta_{\infty}) .
	$$
	Hence we can apply our existence result to nonlinearities of the form $g(t)= \min \{ t^{q_1 -1} , t^{q_2 -1} \}$ with
	$$4 < q_1 < 4 \,\frac{N-\gamma}{N-2}<4 \,\frac{N-\delta}{N-2}<q_2 .$$
	
\end{exa}

\begin{exa} \label{ex3}
	Assume that there exists $\gamma >1$ such that
	$$K(r) = \frac{1}{r} \quad {\mbox as} \; r \rightarrow 0^+ , \quad \quad  K(r) 
	=\frac{1}{r^{\gamma}} \quad {\mbox as} \; r \rightarrow +\infty $$
	and, as in the previous example, suppose that $V$ just satisfies $\bf (H)$, without further assumptions on its behavior at $+\infty$. 
	
	Computing the coefficients $a_0, b_0$ of \cite{Uberlandio2}, one gets $a_0=-2$ and $b_0=-1$, so that \cite[Theorem 1.1]{Uberlandio2} cannot be applied, because it needs $a_0 \geq b_0$. If we set $\beta_0 = \beta_{\infty}=0$, $\alpha_0 = -1$ and $\alpha_{\infty}=-\gamma$, we get 
	$$ 
	q_{0}^* (\alpha_0 , \beta_0 ) = \,\frac{-2 +2N}{N-2}> 2, \quad q_{\infty}^* (\alpha_{\infty},\beta_{\infty})=2\, \frac{N-\gamma}{N-2} <q_{0}^* (\alpha_0 , \beta_0 ) .
	$$
	\noindent So we can apply our existence result to nonlinearities $g(t)= \min \{ t^{q_1 -1} , t^{q_2 -1} \}$ with 
	$$4 < q_1 < 4 \,\frac{N-1}{N-2}, \quad 4 \, \frac{N-\gamma}{N-2} <q_2 , \quad q_1 \leq q_2 .$$
	In particular, setting $\bar{q} = \max \left\{ 4, 4\frac{N-\gamma}{N-2} \right\}$, we can choose 
	$q_1 =q_2 =q \in \left( \bar{q}, 4 \,\frac{N-1}{N-2} \right)$ and we get an existence result for a standard power-type nonlinearity $g(t)= t^{q -1} $. If $\gamma \geq 2$, we have $\bar{q}=4$ and such a result holds for $q \in \left( 4 , 4 \, \frac{N-1}{N-2} \right)$.
	
\end{exa}

\begin{exa} \label{ex4}
	Assume that
	$$K(r) = \frac{1}{r^{\gamma}} \quad \quad \mbox{for} \,\, r\in (0,1), \quad \quad K(r) = \frac{1}{r^{\delta}} \quad \quad \mbox{for} \,\, r>1, $$
	with $0< \delta  $ and $0< \gamma <2$, and that
	$$
	V(r)= \frac{1}{r^2} \quad \quad \mbox{for} \,\, r\in (0,1), \quad \quad V(r) = e^{-r} \quad \quad \mbox{as} \,\, r\rightarrow +\infty .
	$$
	The results of \cite{Uberlandio2} cannot be applied, because they need a power-type decay of $V(r)$ at infinity. Instead, we can apply our results exactly as in Example \ref{ex2}, setting $\beta_0 =\beta_{\infty} =0$, $\alpha_0 = -\gamma $ and $\alpha_{\infty} = - \delta$. Then we get existence results for nonlinearities $g(t)= \min \{ t^{q_1 -1} , t^{q_2 -1} \}$ with
	$$4 < q_1 < 4 \,\frac{N-\gamma}{N-2}, \quad 4 \,\frac{N-\delta}{N-2}<q_2 , \quad q_1 \leq q_2.$$
	If $\delta >2$, we can pick $q_1 =q_2 = q \in \left( 4, \, 4 \, \frac{N-\gamma}{N-2} \right)$ and such results thus concern nonlinearities $g(t) = t^{q-1}$.

\end{exa}

\begin{exa} \label{ex5}
	Assume that
	$$K(r) = \frac{e^r}{r} \quad \quad \mbox{for all} r>0, $$
	and
	$$
	V(r)= \frac{1}{r^2} \quad \quad \mbox{for} \,\, r\in (0,1), \quad \quad V(r) = e^{r} \quad \quad \mbox{as} \,\, r\rightarrow +\infty .
	$$
	The results of \cite{Uberlandio2} cannot be applied, because they need a power-type growth of $K(r)$ at infinity. We can apply our results setting 
	$\alpha_0 = \alpha_{\infty}=-1 $, $\beta_0 = 0$ and $\beta_{\infty} =1$. Then we get 
	$$ 
	q_{0}^* (\alpha_0 , \beta_0 ) = \,\frac{-2 +2N}{N-2}> 2, \quad q_{\infty}^* (\alpha_{\infty},\beta_{\infty})=2\, \frac{N-3}{N-2},
	$$
	which gives existence results for nonlinearities $g(t)= \min \{ t^{q_1 -1} , t^{q_2 -1} \}$ with 
	$$4< q_1 < 4\, \frac{N-1}{N-2}, \quad q_2 >4\, \frac{N-3}{N-2}, \quad q_1 \leq q_2.$$ 
	In particular we can choose $g(t)= t^{q-1}$ with 
	$q_1 =q_2 =q \in \left( 4, \, 4\, \frac{N-1}{N-2} \right)$. 
	
\end{exa}

\begin{exa} \label{ex6}
	As a last example, assume
	$$K(r) = |\log r | \quad \quad \mbox{for} \,\, r\rightarrow 0^+, \quad \quad K(r) = \frac{1}{r^{2}} \quad \quad \mbox{for} \,\, r>1 $$
	and suppose that $V$ satisfies nothing other than $\bf (H)$. The results of \cite{Uberlandio2} cannot be applied, because they need a power-type behavior of $K(r)$ at zero. On the one hand, we can assume  $\beta_{\infty} =0$ and  $\alpha_{\infty} =-2$, which gives 
	$q_{\infty}^* (\alpha_{\infty},\beta_{\infty})=2.$
	On the other hand, fix $q \in \left( 4, \, 4 \, \frac{N}{N-2} \right)$ and pick $\alpha_0 <0$ small enough that 
	$q <\, 4 \, \frac{N+ \alpha_0}{N-2}$. Together with $\beta_0 =0$, this gives
	$$q_{0}^* (\alpha_{0},\beta_{0})=\, 2\, \frac{N+ \alpha_0}{N-2}.$$
	So we obtain existence results for every $4 < q_1 < 4\, \frac{N+ \alpha_0}{N-2}$ and $q_2 >4$. As we can take $q_1 = q_2 =q$ (where $q$ is the same we have choosen before), we get an existence result for every nonlinearity $g(t)= t^{q-1}$ with $q \in \left( 4, \, 4 \, \frac{N}{N-2} \right)$.
	
\end{exa}

\end{document}